\documentclass[final,3p]{elsarticle}

\usepackage[]{natbib}
\biboptions{sort&compress}
\usepackage{amsmath}
\usepackage{amssymb}
\usepackage{url}
\usepackage{amsthm}
\usepackage{mathtools}
\usepackage{xcolor}
\usepackage{soul} 
\usepackage{caption} 

\usepackage{graphicx}
\usepackage{subcaption}
\usepackage{xcolor,colortbl}
\usepackage{pifont} 

\usepackage[colorinlistoftodos,prependcaption,textsize=tiny]{todonotes}
\usepackage[linesnumbered, ruled]{algorithm2e}

\newtheorem{thm}{Theorem}

\newtheorem{obs}[thm]{Observation}
\newtheorem{lemma}[thm]{Lemma}
\newtheorem{conj}[thm]{Conjecture}
\newtheorem{prop}[thm]{Proposition}
\newtheorem{cor}[thm]{Corollary}

\theoremstyle{remark}
\newtheorem*{rem}{Remark}%
\newcommand\prob[3]{%
\vspace{\parindent}
  \hspace{\parindent} {\bf #1} \par
  \hspace{\parindent} {\bfseries Input}: #2\par
  \hspace{\parindent} {\bfseries Output}: #3\par
  \vspace{\parindent}
}

\newcommand{\gf}[1]{\mathbf{#1}}                
\newcommand{\gfroot}[2]{(#1, #2)}     
\newcommand{\gfo}[1]{\mathbf{G}^{#1}}           
\newcommand{\graphcount}[3]{deg_{#1}(#2, #3)}   

\newcommand{\Hasym}[0]{H_{\text{asym}}}
\newcommand{\ecc}[1]{\epsilon(#1)}
\newcommand{\Graph}[1]{\mathcal{G}_{#1}}
\newcommand{\Graphlet}[1]{\mathcal{G'}_{#1}}
\newcommand{\gmvh}[1]{H \setminus \{ #1\}} 
\newcommand{\diam}[1]{\text{diam}(#1)}
\newcommand{\dist}[0]{\text{dist}}

\usepackage{float}

\begin{document}

\begin{frontmatter}
\title{Reconstructing graphs and their connectivity using graphlets}

\author[cas,mff]{David Hartman}
\ead{hartman@cs.cas.cz}

\author[cas,mff]{Aneta Pokorná}
\ead{pokorna@cs.cas.cz}

\author[cas,mff]{Daniel Trlifaj}
\ead{daniel.trlifaj@gmail.com}

\author[barc]{Lluís Vena}
\ead{lluis.vena@upc.edu}

\address[cas]{Institute of Computer Science of the Czech Academy of Sciences, \\ Pod Vod\'{a}renskou v\v{e}\v{z}\'{i} 271/2, 182 07 Prague, Czech Republic}
\address[mff]{Computer Science Institute of Charles University, Faculty of Mathematics and Physics, Charles University, Malostransk\'{e} n\'{a}m. 25, Prague 1, 118 00, Czech Republic}
\address[barc]{Serra Húnter Fellow, IMTech Fellow, Centre de Recerca Matemàtica Fellow, Department of Mathematics,
Universitat Politècnica de Catalunya - BarcelonaTech (UPC)}

\vspace{-1em}

  \date{\today}

  \begin{abstract} 
  Graphlets are subgraphs rooted at a fixed vertex. The number of occurrences of graphlets aligned to a particular vertex, called graphlet degree sequence (gds), gives a topological description of the surrounding of the analyzed vertex. Graphlet degree distribution (gdd) of a graph is a matrix containing graphlet degree sequence for all vertices in the given graph.
  A long standing open problem called reconstruction conjecture (RC) asks whether the structure of a graph is uniquely determined by the multiset of its vertex-deleted subgraphs. Graphlet degree distribution up to size $(n-1)$, $(\leq n-1)$-gdd, gives more information to reconstruct the graph and we use it to reconstruct any graph having a unique almost-asymmetric vertex-deleted subgraph, where almost-asymmetric means that at most one automorphism orbit has size larger than one. 
  Moreover, we prove that any graph containing a vertex-cut of size $1$ or any graph of order $n$ having a vertex with degree at most $2$ or at least $n-2$ is reconstructible from its $(\leq n-1)$-gdd, which expands results shown in the standard RC.
  We also discuss the relation between gdd and graph connectivity and the conditions on $(\leq 3)$-gdd, whose breaking means that no graph with such gdd exists.
\end{abstract}

\begin{keyword}
 graphlets \sep reconstruction conjecture \sep asymmetry  \sep trees \sep graphlet degree distribution
\end{keyword}

\end{frontmatter}


\section{Introduction}
The analysis of the local topology of graphs using the frequencies of small rooted subgraphs called graphlets has been introduced by Przulj \cite{przulj2004}. This tool has been successfully applied 
to some real-world problems such as to improve the random model of protein-protein interaction networks \cite{protein-align-Przulj},
to understand the topology of social networks \cite{janssen2012}, to predict connections of microRNAs with diseases \cite{gimda}, or to analyze the structure of the human brain \cite{Finotelli2021}.
These applications suggest that graphlets are a useful tool to compare the local topological similarity of vertices.

The graphlet analysis lies in counting $(\leq k)$-graphlet degree sequence of every vertex $v$, that is, the counts of induced occurrences of graphlets of sizes at most $k$ where the root is matched to $v$. The matrix containing $(\leq k)$-sequence of every vertex in the given graph is called its $(\leq k)${\it -graphlet degree distribution (gdd)}.

The question of whether the structure of a graph can be recovered from the multiset of its vertex-deleted subgraphs, called the {\it deck}, known as the reconstruction conjecture (RC), is one of the long-standing unresolved conjectures in graph theory. 
In his PhD thesis, P.J. Kelly proved reconstruction conjecture for regular graphs, Eulerian graphs and trees. \cite{kelly1942isometric} 
Later, the conjecture has been proved for outer planar graphs\cite{Giles1974outerplanar},
 separable graphs without end-vertices\cite{Bondy1969OnUC},
or maximal planar graphs\cite{Lauri1981max-planar}.
It has been even shown that all graphs are reconstructible if and only if all $2$-connected graphs with radius at most two are reconstructible.\cite{rec-comp-rad-2}

We examine an alternative version of RC, asking whether a graph is reconstructible given its $(\leq n-1)$-{\it graphlet degree distribution} (gdd). The intuition behind this effort is that the information about the number of rooted subgraphs for respective vertices seems to provide richer information compared to the deck. We can support this intuition by showing that for the $2$-connected case, the deck can be easily obtained from the $(\leq n-1)$-gdd. 

Originally, it was assumed that solving RC for asymmetric graphs would be easier than for symmetric ones (\cite{recnumsur}, page 446). However, there are no results related to the standard reconstruction conjecture using asymmetry. 
We show that if a $2$-connected graph $H$ contains a vertex-deleted asymmetric subgraph $\Hasym$ that can be obtained by either removal of a unique vertex or if all the vertices whose removal leads to $\Hasym$ share the same neighbourhood, then $H$ can be reconstructed from its $(n-1)$-gdd.
Hence, we prove graphlet reconstructibility for a family of graphs that has not yet been shown to be reconstructible from the deck.
As almost all finite graphs are asymmetric \cite{erdosrenyi1963asymmetric}, there are also many vertex-deleted asymmetric subgraphs, so our result allows reconstruction of potentially many graphs.

In this article, section \ref{s:prop} describes some basic properties of graphlet degree sequences. Then in section \ref{s:mot} we explain the relation of graphlets and motifs, other commonly used tool in network analysis.
The links between graphlet degree distribution and connectivity are described in section \ref{s:conn}. 
Section \ref{s:rec} shows the reconstructions of graphs having vertex-cut of size one, graphs having a vertex of degree at most 2 or at least $(n-2)$ and graphs containing specific asymmetric or almost-asymmetric vertex-deleted subgraphs.
Finally, sections \ref{s:uniq} and \ref{s:dec} are about the uniqueness of graphlet degree sequences and examine the decision problem whether there exists a graph with given graphlet degree sequence.

\section{Preliminaries}
Throughout this work, $H = (V, E)$ denotes an unoriented graph with vertices $V$ and edges $E \subseteq { V \choose 2 }$. 
The situation in which the vertices $u$ and $v$ are connected by an edge will be denoted as $\{u,v\} \in E$ or, more simply, $uv\in E$. 
Unless otherwise stated, $n = |V|$. For $v \in V$, $N_H(v)$ is the {\it neighbourhood} of $v$, i.e. $\{w\, |\, vw \in E\}$. For this and the following definitions, the subscript $H$ is omitted whenever $H$ is clear from the context. 
Then $\deg_H(v) = |N_H(v)|$ is the {\it degree} of a vertex $v$. The distance $d_H(u,v)$ between two vertices $u,v \in V(H)$ is the length of the shortest path between $u$ and $v$. The {\it eccentricity} $\ecc{v}$ is $\max_{u \in V(H)} \{d(u,v)\}$ and $\diam{G} = \max_{v \in V(H)} \ecc{v}$. The symbol $\cong$ stands for graph isomorphism. The set of all non-isomorphic graphs is denoted by $\Graph{}$, $\Graph{n} = \{G \in \Graph{}\, |\ |V(G)| = n\}$ and $\Graph{\leq n} = \{G \in \Graph{}\, |\ |V(G)| \leq n\}$.

A {\it graphlet} rooted in $r$ is a pair $(G, r)$ where $G$ is a connected\footnote{There exists a notion of {\it graphettes}, basically graphlets that are not necessarily connected.\cite{graphettes}} subgraph of $H$ with $|V(G)| < n = |V(H)|$ and $r \in V(G)$. We call the graph $G$ the {\it underlying graph} of the graphlet $(G,r)$ and assume that there exists a function $U((G,r)) = G$. The {\it size} of a graphlet is understood as the size of the underlying graph, $|(G,r)| = |V(G)|$. Let $\Graphlet{}$ denote the set of all non-isomorphic graphlets, $\Graphlet{n} = \{G \in \Graphlet{}\, |\ |G| = n\}$, $\Graphlet{\leq n} = \{G \in \Graphlet{}\, |\ |G| \leq n\}$ and $\Graphlet{[H]}$ is the set of all graphlets with underlying graph $H$. Graphlets $(G, r)$ and $(H, s)$ are {\it isomorphic}, denoted as $(G,r) \cong (H,s)$, if $G \cong H$ and the respective isomorphism maps $r$ to $s$. To refer about an isomorphism class of graphs, we use a fixed ordering $\gamma$ of $\Graph{}$, such that $\gamma(G) < \gamma(H)$ whenever $|V(G)| < |V(H)|$ and $\gamma(G) = \gamma(H)$ iff $G \cong H$. The graph $H$ with $\gamma(G) = i$ is denoted by $G^i$.
Similarly, to identify an isomorphism class of graphlets, we use a fixed ordering $\vartheta$ of $\Graphlet{}$, such that $\vartheta((G,r)) < \vartheta((H,s))$ whenever $\gamma(G) < \gamma(H)$ and $\vartheta((G,r)) = \vartheta((H,s))$ iff $(G,r) \cong (H,s)$. The graphlet $(G,r)$ with $\vartheta((G,r)) = i$ is denoted by $\gfo{i}$ and its underlying graph is $G = U(\gfo{i})$. 
There are several choices of the ordering $\vartheta$. For graphlets up to size $5$, it was given explicitly by \cite{przulj2004}, see Figure \ref{f:orderings}. 
However, \cite{orbit-counting} gave a similar ordering, which is deterministically defined for graphlets of any size.

\begin{figure}
    \centering
    \includegraphics[width=0.8\linewidth]{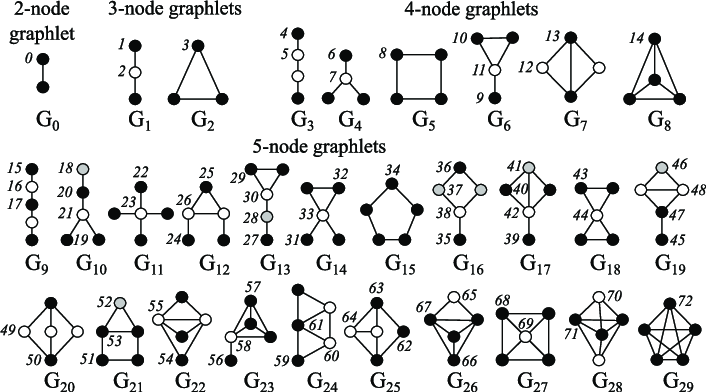}
    \caption{The orderings $\gamma$ of graphs $\Graph{}$ and $\vartheta$ of graphlets $\Graphlet{}$ up to size $5$ given by Pržulj \cite{przulj2007}.
Under each graph $F$ there is a label $G_{\gamma(F)}$.
Automorphism orbits are distinguished by different shades of black, white and gray. 
The number $\vartheta((F, r))$ assigned to a graphlet \gfroot{F}{r} is written next to some vertex from the automorphism orbit of $r$.
Image source : \cite{L-GRAAL}.
}
    \label{f:orderings}
\end{figure}

We say that a vertex $v \in V(H)$ {\it touches } a graphlet $(G,r)$, if there exists an embedding $e$ of $G$ inside $H$ such that $e(r) = v$. The {\it graphlet degree} $\graphcount{H}{\gfroot{G}{r}}{v}$ of a vertex $v \in V(H)$ and graphlet $\gfroot{G}{r}$ in $H$ is the number of times $v$ touches $(G,r)$. The $\leq k$-{\it graphlet degree sequence} (shortly $\leq k$-gds) of $v \in V(H)$ is a vector of values $(\graphcount{H}{\gfo{i}}{v}\ |\ i \in \{\vartheta(\Graphlet{\leq k})\})$. Finally, $\leq k$-{\it graphlet degree distribution} (or shortly $\leq k$-gdd) of a graph $H$ is a matrix $D$ with dimensions ${|V(H)|\times |\vartheta(\Graphlet{\leq k})|}$ where $D_{i,j} = \graphcount{H}{\gfo{j}}{i}$.

\section{Properties of graphlet counts and graphlet degree sequences} \label{s:prop}

The $(\leq n-1)$-graphlet degree distribution can be viewed as a mapping $\text{gdd: } G \rightarrow D^{n\times k}$ where $G \in \Graph{}, |V(G)| = n$ and $k = \max_{\gfo{} \in \Graphlet{n-1}} \vartheta(\gfo{})$. Then it is natural to ask for an upper bound on the elements of $D$. 

\begin{obs}
    Let $D^{n\times k}$ be $(\leq n-1)$-gdd of $G$. Then for all $i \in [n], j \in [k]$ holds
    $$D_{i,j} \leq {n-1 \choose |\gfo{j}| - 1}$$
    and the maximum is attained for $G \cong K_n$.
\end{obs}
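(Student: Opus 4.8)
The plan is to estimate $D_{i,j}$ from above by the number of vertex subsets that can possibly carry an occurrence of the graphlet $\gfo{j}$ rooted at $i$, and then to exhibit $K_n$ as a graph realising this estimate. First I would unwind the definition: $D_{i,j} = \graphcount{G}{\gfo{j}}{i}$ counts the induced occurrences of $\gfo{j}$ in $G$ whose root is matched to the vertex $i$. Every such occurrence is carried by a vertex set $S \subseteq V(G)$ with $i \in S$ and $|S| = |\gfo{j}|$, namely the image of the corresponding embedding. The crucial bookkeeping step is to observe that a fixed $S$ contributes at most one to $D_{i,j}$: whether $S$ produces an occurrence of $\gfo{j}$ rooted at $i$ is the single yes/no question of whether the induced subgraph $G[S]$ is isomorphic to $U(\gfo{j})$ via an isomorphism taking $i$ to the root. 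Consequently the occurrences counted by $D_{i,j}$ inject into the family of size-$|\gfo{j}|$ subsets of $V(G)$ containing $i$.

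Next I would count those subsets directly. Fixing $i$ as the root, the remaining $|\gfo{j}| - 1$ vertices of such a set must be chosen among the other $n-1$ vertices of $G$, giving exactly ${n-1 \choose |\gfo{j}| - 1}$ candidate sets. Combining this with the injection from the previous step yields $D_{i,j} \leq {n-1 \choose |\gfo{j}| - 1}$, which is the claimed inequality.

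For tightness I would specialise to $G \cong K_n$ and to the index $j$ for which $U(\gfo{j}) = K_{|\gfo{j}|}$ is the complete graph (which has a single automorphism orbit, so its root is unambiguous). Since every induced subgraph of $K_n$ is complete, each of the ${n-1 \choose |\gfo{j}| - 1}$ candidate sets containing $i$ genuinely realises $\gfo{j}$ with $i$ at the root, so the corresponding entries of $D$ meet the bound exactly. This establishes that the maximum is attained by $K_n$.

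The only genuinely delicate point is the bookkeeping in the first step, namely pinning down that $\graphcount{G}{\gfo{j}}{i}$ counts induced occurrences (equivalently, automorphism orbit positions) rather than raw embeddings, so that a single subset $S$ is counted at most once; this is what makes the injection into subsets valid and what makes $K_n$ extremal rather than exceeding the bound. Once this convention is fixed, the remainder is a one-line binomial count, so I expect no substantive obstacle.
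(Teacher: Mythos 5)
Your proof is correct and follows essentially the same approach as the paper's: you bound $D_{i,j}$ by the number $\binom{n-1}{|\gfo{j}|-1}$ of vertex subsets of size $|\gfo{j}|$ containing the root $i$, each contributing at most one induced occurrence, and you verify tightness on $K_n$ using the complete rooted graphlet, which has a single automorphism orbit. Your explicit remark that graphlet degrees count induced occurrences (one per subset) rather than raw embeddings is a useful clarification of the same argument, not a different route.
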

\begin{proof}
 The expression ${n-1 \choose |\gfo{j}| - 1}$ gives the upper bound on the number of graphlets of size $|\gfo{j}|$ because every choice of $|\gfo{j}|-1$ vertices of $V(G) \setminus \{i\}$ ($i$ is included automatically as the root) may or may not result in a valid graphlet (due to the resulting graph not being connected).
 
 In the complete graph, every possible graphlet size is present and every such graphlet is of the same form of a rooted complete subgraph of the given size, which has only one automorphism orbit.
\end{proof}

\begin{cor}
Let $D^{n\times k}$ be $(\leq n-1)$-gdd of $G$. Then for all $i \in [n]$,
     $$\sum D_{i,*} = \sum_{j \leq k} D_{i,j} \leq \sum_{p \leq n-1} {n-1 \choose p - 1}$$
\end{cor}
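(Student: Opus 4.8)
The plan is to recognize the left-hand side as a count of connected induced subgraphs through vertex $i$ and then bound it by the count of all vertex subsets through $i$, grouped by size. First I would observe that, by definition, $\sum_{j\leq k} D_{i,j} = \sum_{j} \graphcount{G}{\gfo{j}}{i}$ is precisely the total number of induced subgraphs of $G$ that contain $i$, are connected, and have at most $n-1$ vertices, each counted once. Indeed, every vertex subset $S \ni i$ with $G[S]$ connected and $|S| \leq n-1$ determines, together with the orbit of $i$ in $G[S]$, exactly one graphlet isomorphism type $\gfo{j}$ with $|\gfo{j}| = |S|$; conversely, each unit counted by $\graphcount{G}{\gfo{j}}{i}$ arises from such a subset. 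Hence the distinct columns $j$ count disjoint families of subsets, and summing them introduces no double counting.

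Next I would group this count by graphlet size. Fixing a size $p$, the graphlets of size $p$ that $i$ touches are in bijection with the connected induced subgraphs on $p$ vertices containing $i$, and every such subgraph is obtained by choosing its remaining $p-1$ vertices from $V(G)\setminus\{i\}$. This is exactly the counting argument already used in the Observation, except now aggregated over all isomorphism types of size $p$ at once. Since not every choice of $p-1$ vertices induces a connected subgraph, we get
$$\sum_{j:\,|\gfo{j}| = p} D_{i,j} \;\leq\; {n-1 \choose p-1}.$$
Finally I would sum over the admissible sizes $p \leq n-1$ to obtain the claimed inequality.

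The only point that needs care — and the reason the statement is not an immediate consequence of applying the Observation term by term — is the grouping step. Naively summing the per-entry bound $D_{i,j} \leq {n-1 \choose |\gfo{j}|-1}$ over all $j$ would multiply each binomial coefficient by the number of isomorphism types of that size, yielding a far weaker bound. The substantive observation is that, for a fixed size $p$, distinct graphlet types \emph{partition} the $(p-1)$-subsets of $V(G)\setminus\{i\}$ rather than compete for them, so their degrees add up to at most ${n-1 \choose p-1}$ with no extra factor. Equality throughout holds exactly when every subset induces a connected graph, i.e. for $G \cong K_n$, which is consistent with the Observation.
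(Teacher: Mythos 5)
Your proposal is correct and rests on the same counting argument the paper uses: the paper gives no separate proof of this corollary because the proof of the preceding Observation already bounds the \emph{total} number of graphlets of a given size rooted at $i$ (not just each type separately) by the number of $(|\gfo{j}|-1)$-subsets of $V(G)\setminus\{i\}$, which is precisely your per-size grouping step. Your remark that the corollary does not follow from the Observation's statement term by term — only from its subset-counting proof — is accurate and worth making explicit, but it is a clarification of the paper's argument rather than a different route.
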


The complete graph serves as a showcase of a graph with maximal attainable values in gdd. On the other hand, what are the graphs with the lowest possible upper bound on the values of gdd?

\begin{obs}
Let $D^G$ be the $(n-1)$-gdd of a connected graph $G$ on $n \geq 3$ vertices. Then
$$\min_{G \in \Graph{n}} \max_{i \in [n], j \in [k]} D^G_{i,j} = 2$$
where $k = \max_{\gfo{} \in \Graphlet{n-1}} \vartheta(\gfo{})$ is the maximal index of a graphlet smaller than $G$.
The value is obtained for $G \cong P_n$ and $G \cong C_n$.
\end{obs}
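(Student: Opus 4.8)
The plan is to prove the two inequalities $\min_{G \in \Graph{n}} \max_{i,j} D^G_{i,j} \geq 2$ and $\min_{G \in \Graph{n}} \max_{i,j} D^G_{i,j} \leq 2$ separately, the minimum ranging over connected graphs. For the lower bound I would use the single edge graphlet $\gfroot{K_2}{r}$, which has size $2 \leq n-1$ and is therefore present in every $(n-1)$-gdd once $n \geq 3$. Its graphlet degree at a vertex $v$ equals $\deg_G(v)$, since each induced copy of $K_2$ at $v$ is exactly an edge incident to $v$ and $K_2$ has a single automorphism orbit. A connected graph on $n \geq 3$ vertices has at least $n-1$ edges, hence degree sum at least $2(n-1) > n$, so some vertex has degree at least $2$; the corresponding entry of $D^G$ is then at least $2$. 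This shows $\max_{i,j} D^G_{i,j} \geq 2$ for every connected $G \in \Graph{n}$, so the minimum is at least $2$.

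For the matching upper bound I would exhibit $P_n$ and $C_n$ and show that no entry of their $(n-1)$-gdd exceeds $2$. The key structural observation is that every connected induced subgraph of $P_n$ or $C_n$ on fewer than $n$ vertices is a set of consecutive vertices, hence isomorphic to some path $P_m$: in $P_n$ a connected induced subgraph is a subpath, and in $C_n$ a proper vertex subset inducing a connected graph cannot leave two ``gaps'' and so forms a single arc, which induces a path rather than a cycle. Thus the only graphlets occurring in $P_n$ and $C_n$ are the rooted paths $\gfroot{P_m}{r}$. I would then bound, for a fixed such graphlet and a fixed vertex $v$, the number of induced copies in which $v$ plays the role of the orbit of $r$. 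Writing $d$ for the distance from $r$ to the nearer endpoint of $P_m$, the orbit of $r$ consists of the vertices at distance $d$ from each of the two ends. Inside any length-$m$ window placing $v$ at that orbit, $v$ must sit at distance $d$ from the left endpoint or at distance $d$ from the right endpoint of the window, and each requirement pins down the window uniquely. Hence at most two induced copies qualify, so $\graphcount{G}{\gfroot{P_m}{r}}{v} \leq 2$, giving $\max_{i,j} D^G_{i,j} \leq 2$ for $G \in \{P_n, C_n\}$. Combined with the lower bound this forces the value $2$ to be attained at both graphs.

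The routine but essential case analysis, which I expect to be the main technical point, concerns when the two candidate windows coincide or fail to exist. For the central orbit of an odd path ($d = (m-1)/2$) the two placements collapse to one, so the count drops to $1$; near the ends of $P_n$ one of the two windows may run off the path, again lowering the count; and on the cycle I must verify that the two arcs are genuinely distinct subsets, which I would obtain from the index computation $j_1 \equiv i-d \pmod n$ versus $j_2 \equiv i-m+d+1 \pmod n$ together with the orbit convention $2d \leq m-1$ and $m \leq n-1$, so that $2d \equiv m-1 \pmod n$ can only occur in the central case. None of these reductions can raise a count above $2$, so they affect attainment rather than the bound. Finally, to confirm that $2$ is actually realized and not merely an upper bound, I note that every interior vertex of $P_n$ and every vertex of $C_n$ has degree $2$, so already $\gfroot{K_2}{r}$ contributes an entry equal to $2$; the base case $n = 3$ is checked directly and agrees.
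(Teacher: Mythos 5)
Your proposal is correct and follows essentially the same route as the paper: the lower bound comes from the edge graphlet at a vertex of degree at least $2$ (guaranteed in any connected graph on $n \geq 3$ vertices), and the upper bound comes from observing that $P_n$ and $C_n$ only contain rooted-path graphlets, each of which a vertex can touch at most twice via the left/right placement dichotomy. Your window-indexing argument is a more rigorous formalization of the paper's informal ``a third occurrence would require branching'' step, and your degree-sum argument fixes the paper's imprecise claim that a vertex of degree exactly two always exists, but the underlying approach is the same.
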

\begin{proof}
    A lower value cannot be obtained because in connected graphs on $n \geq 3$ vertices, there is always a vertex of degree two, which means $D_{v,0} = 2$ for $v$ with $\deg_G(v)=2$.

    Now we show that $\max_{i \in [n], j \in [k]} D^G_{i,j} = 2$ holds for $G \cong P_n$ and $G \cong C_n$.
    The vertices in $G$ touch only graphlets whose underlying graph is a path of length $\leq n - 1$. 
    Suppose that there exists $v \in V(G)$ such that $v$ touches a graphlet weakly isomorphic to a path more than two times.
    Then the first two occurrences can have the longer part of the path to the left and right from $v$, but the existence of a third occurrence would require a branching at some point, implying the existence of a vertex $w$ with $\deg_G w \geq 3$, which is a contradiction to $G$ being a path or a cycle.
\end{proof}

\begin{obs}
    When comparing gds of a vertex $v$ in $H$ and $H \setminus x$ for some $x \in V(H)$, the frequency of any graphlet $\gfo{i}$ in the graphlet degree sequence of $v$ can only decrease. 
    Moreover, if $d = d(v,x)$ is the distance between $v$ and $x$, only the  frequency of graphlets with root eccentricity $d$ decreases.
\end{obs}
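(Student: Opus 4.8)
The plan is to split the statement into its two assertions and prove each by reasoning directly about the set of induced occurrences that $v$ touches, where the root of $\gfo{i}$ is written $r$ and its underlying graph is $U(\gfo{i})$.

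For the monotonicity assertion (the frequency can only decrease), I would show that every occurrence of $\gfo{i}$ rooted at $v$ in $H \setminus x$ is also an occurrence in $H$. Concretely, if $S \subseteq V(H) \setminus \{x\}$ is a vertex set whose induced subgraph in $H \setminus x$ is isomorphic to $U(\gfo{i})$ via an isomorphism sending the root to $v$, then since $x \notin S$ we have $(H \setminus x)[S] = H[S]$, so the same set $S$ witnesses an occurrence of $\gfo{i}$ rooted at $v$ in $H$. This yields an injection from occurrences in $H \setminus x$ into occurrences in $H$, whence $\graphcount{H \setminus x}{\gfo{i}}{v} \leq \graphcount{H}{\gfo{i}}{v}$, which is exactly the first claim; note it is insensitive to the precise counting convention, since it only matches surviving occurrences with themselves.

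For the second assertion I would identify precisely which occurrences are lost. An occurrence on vertex set $S$ survives the deletion of $x$ if and only if $x \notin S$, so the only occurrences that disappear are those with $x \in S$. The key point is that the embedding $e$ realizing such an occurrence is an isomorphism onto the induced subgraph $H[S]$, so for the vertex $w$ with $e(w) = x$ we have $d_{U(\gfo{i})}(r, w) = d_{H[S]}(v, x) \geq d_H(v, x) = d$, the inequality being the fact that passing to an induced subgraph never shortens distances. Hence the eccentricity of $r$ in $U(\gfo{i})$ is at least $d_{U(\gfo{i})}(r,w) \geq d$. Reading this contrapositively gives what is wanted: if the root eccentricity of $\gfo{i}$ is below $d$, then no occurrence of $\gfo{i}$ rooted at $v$ can contain $x$, so none is lost and that count is unchanged.

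I expect the only delicate point to be the distance comparison $d_{H[S]}(v,x) \geq d_H(v,x)$, i.e. the monotonicity of distances under taking induced subgraphs; the rest is bookkeeping about which vertex sets survive deletion. I would also flag that the sharp form of the second claim is ``root eccentricity at least $d$'' rather than exactly $d$, since a graphlet whose root eccentricity strictly exceeds $d$ can still lose occurrences through $x$ (for example a rooted path far longer than $d$); the guarantee the argument delivers is precisely that graphlets of root eccentricity below $d$ are left untouched.
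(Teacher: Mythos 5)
The paper gives no proof of this observation at all---it is stated bare, with only an informal remark following it---so your argument is not competing with any written proof; on its own terms it is correct and complete. The monotonicity half is exactly right: an induced occurrence of $\gfo{i}$ rooted at $v$ in $H \setminus x$ lives on a vertex set $S$ with $x \notin S$, and $(H \setminus x)[S] = H[S]$, which gives the injection you describe. The second half is also argued correctly, and your closing flag is not a quibble but a genuine correction: the observation as printed (``only the frequency of graphlets with root eccentricity $d$ decreases'') is false read literally. Concretely, let $H$ be the path $v$--$a$--$x$--$b$--$c$, so $d_H(v,x) = 2$; deleting $x$ destroys the occurrence on $\{v,a,x,b\}$ of the path on four vertices rooted at its endpoint $v$, a graphlet whose root eccentricity is $3 \neq 2$. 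What your distance argument ($d_{U(\gfo{i})}(r,w) = d_{H[S]}(v,x) \geq d_H(v,x)$, valid because every path in an induced subgraph is a path in $H$) actually establishes is the correct sharp form: only graphlets with root eccentricity \emph{at least} $d$ can lose occurrences, equivalently those with root eccentricity below $d$ are untouched. That is the version the statement should assert, and your proof should be read as establishing it.
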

Especially, if the degree of root is the same as in $H$, most of the small graphlets ($\leq 5$ vertices) do not change at all.

\begin{obs}
    In a graphlet degree distribution of any connected graph $G$ of order $n$, there always exists a graphlet of size $(n-1)$.
\end{obs}
\begin{proof}
    For $x \in V(G)$, $G \setminus \{x\}$ is not an underlying graph of $(n-1)$-sized graphlets only if it is not connected. It cannot be the case that $G \setminus \{x\} \ \forall x \in V(G)$.
    Let $T$ be a spanning tree of $G$. Then for a pendant vertex $p$ of $T$, $G \setminus \{p\}$ is connected because $T \setminus \{p\}$ is connected and $T \subseteq G, |T| = |G| = n$.
\end{proof}

\section{Graphlets vs motifs}\label{s:mot}
A {\it motif} in a network is an induced subgraph with significantly higher number of appearances in the given network than is expected in a random network model. \cite{AlonMotifs}
Similarly to graphlet degree distribution, a network can be characterized by its {\it motif distribution}, a vector containing the number of induced occurrences of all types of (unrooted) subgraphs up to a given size $k$ in the whole graph.
Note that the same result can be achieved by adding the lines of $(\leq k)$-gdd, further summing all occurrences of weakly isomorphic graphlets and dividing each number by the size of the subgraph that it represents. The division is necessary due to the multiple counting of the same subgraph under different roots. 
Note that real motifs can be recognized by comparing the overall frequency of the subgraph with the expected frequency in a random graph model.

Is graphlet degree distribution matrix a more powerful characterization of the network than its motif distribution vector? 
The answer is positive, as is shown in Figure \ref{f:gdd-stronger-motifs}.

\begin{figure}
\begin{minipage}{0.5\textwidth}
\centering
\includegraphics{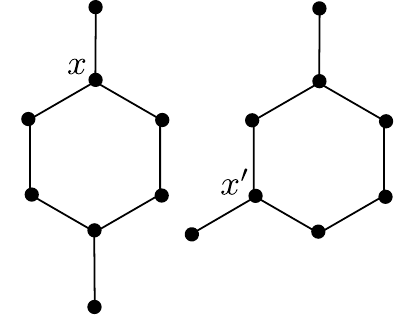}
\caption{Two graphs that cannot be distinguished by $\leq 4$ motif distribution, but can be distinguished by $\leq 4$-gdd, see Tables \ref{t:motifs} and \ref{t:graphlets}.}
\label{f:gdd-stronger-motifs}
\end{minipage}%
\begin{minipage}{0.5\textwidth}
    \centering
    \begin{tabular}{|c|c|c|c|c|c|c|c|c|c|}
    \hline
         $\gamma  $ & 0 & 1 & 2 & 3 & 4 & 5 & 6 & 7 & 8 \\
         $G_1$     & 8 & 10 & 0 & 10 & 2 & 0 & 0 & 0 & 0 \\
         $G_2$     & 8 & 10 & 0 & 10 & 2 & 0 & 0 & 0 & 0 \\
         \hline
    \end{tabular}
    \captionof{table}{The motif degree vector for $G_1$ and $G_2$ for graphs up to size $4$.}
    \label{t:motifs}
    \vspace{0.2cm}
    \begin{tabular}{|c|c|c|c|}
    \hline
    $\vartheta $ & 0 & 1 & 2  \\
    \hline
    $G_1$ & & & \\
    1=6 & 3 & 2 & 3  \\
    2=3=4=5 & 2 & 3 & 1 \\ 
    7=8 & 1 & 2 & 0  \\
    \hline
        $G_2$     &  &  &  \\
        1=4 & 3 & 2 & 3\\
        3=6 & 2 & 3 &1 \\
        7=8& 1& 2& 0\\
        2& 2& 4& 1\\
        5& 2& 2& 1\\
         \hline
    \end{tabular}
    \captionof{table}{The $(\leq 3)$-graphlet degree distribution sequences for $G_1$ and $G_2$.}
    \label{t:graphlets}
\end{minipage}

\end{figure}

\section{Relationship between gdd and graph connectivity}\label{s:conn}

The following lemma shows that $H$ must be $2$-vertex connected if we do not want to lose any information when moving from $H$ to its $(n-1)$-graphlet degree distribution. Otherwise, some of the vertex-deleted graphs become disconnected, split into larger number of smaller graphlets and are not present in the $(n-1)$-graphlet degree distribution of $H$.
\begin{lemma}\label{l:2-conn-whole-gds-from-n-1}
    Consider a connected graph, $H$, on $n$ vertices. If $H$ is $2$-vertex connected, $(n-1)$-gdd of $H$ determines the $(\leq n-2)$-gdd of $H$. 
\end{lemma}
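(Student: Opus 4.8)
The plan is to exploit two facts: that $2$-connectivity forces every vertex-deleted subgraph $\gmvh{x}$ to be a connected graph on exactly $n-1$ vertices, hence a legitimate size-$(n-1)$ graphlet, and that every small induced subgraph of $H$ reappears unchanged in all but a few of the cards $\gmvh{x}$. First I would observe that, since $H$ is $2$-vertex connected, $\gmvh{x}$ is connected for every $x$, so for a fixed vertex $v$ the rooted graph $\gfroot{\gmvh{x}}{v}$ is a well-defined graphlet of size $n-1$ whenever $x \neq v$. The entry $D_{v,i}$ of the $(n-1)$-gdd with $|\gfo{i}| = n-1$ then counts precisely the number of deletions $x \neq v$ for which $\gfroot{\gmvh{x}}{v} \cong \gfo{i}$; in other words, row $v$ of the $(n-1)$-gdd is exactly the histogram of the multiset $\{\!\!\{\gfroot{\gmvh{x}}{v} : x \in V(H),\, x \neq v\}\!\!\}$ of rooted isomorphism classes. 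In particular this multiset is recoverable from the $(n-1)$-gdd.

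The central step is a double-counting identity relating graphlet degrees in $H$ to those in its cards. Fix a graphlet $\gfo{j}$ of size $s = |\gfo{j}| \leq n-2$. Any induced occurrence of $\gfo{j}$ rooted at $v$ occupies a vertex set $S$ with $v \in S$ and $|S| = s$, and it survives, unchanged and still induced, in $\gmvh{x}$ exactly when $x \notin S$, that is, for $n - s$ choices of $x$ (all of them distinct from $v$). Conversely every induced occurrence rooted at $v$ inside a card $\gmvh{x}$ is also one in $H$. Counting pairs (occurrence in $H$, surviving card) in two ways therefore yields
$$\sum_{x \neq v} \graphcount{\gmvh{x}}{\gfo{j}}{v} = (n - s)\,\graphcount{H}{\gfo{j}}{v}.$$

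Because each summand $\graphcount{\gmvh{x}}{\gfo{j}}{v}$ is an isomorphism invariant of the rooted graph $\gfroot{\gmvh{x}}{v}$, the left-hand side depends only on the multiset recovered above, and hence on row $v$ of the $(n-1)$-gdd; explicitly it equals $\sum_{i : |\gfo{i}| = n-1} D_{v,i}\,\graphcount{U(\gfo{i})}{\gfo{j}}{r_i}$, where $r_i$ denotes the root of $\gfo{i}$. Since $s \leq n-2$ the factor $n-s$ is nonzero, so dividing gives the closed form
$$\graphcount{H}{\gfo{j}}{v} = \frac{1}{n-s}\sum_{i : |\gfo{i}| = n-1} D_{v,i}\,\graphcount{U(\gfo{i})}{\gfo{j}}{r_i},$$
and ranging over all $v$ and all $\gfo{j}$ with $|\gfo{j}| \leq n-2$ reconstructs the entire $(\leq n-2)$-gdd.

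The step I expect to require the most care is verifying that $2$-connectivity is exactly what makes the multiset $\{\!\!\{\gfroot{\gmvh{x}}{v}\}\!\!\}$ fully visible in the $(n-1)$-gdd, so that the identity genuinely uses the hypothesis rather than mere connectivity of $H$. If some $\gmvh{x}$ were disconnected it would not be a graphlet at all, contributing $0$ to every size-$(n-1)$ entry; that card would then be missing from the sum over $x \neq v$, the totals $\sum_{i : |\gfo{i}| = n-1} D_{v,i}$ would fall short of $n-1$, and the reconstruction would undercount. Making this failure mode precise, and confirming that under $2$-connectivity all $n-1$ cards containing $v$ are accounted for, is the part I would write out most carefully.
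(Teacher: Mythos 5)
Your proof is correct and takes essentially the same approach as the paper: the paper's (terse) proof idea is exactly that each induced occurrence of a graphlet of size $\ell \leq n-2$ rooted at $v$ survives in precisely $n-\ell$ of the size-$(n-1)$ graphlets of $H$ rooted at $v$, so one sums the counts of $\gfo{j}$ over those size-$(n-1)$ graphlets and divides by $n-\ell$. Your write-up merely makes the double-counting identity, the closed-form expression in terms of the row $D_{v,*}$, and the precise role of $2$-connectivity explicit.
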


\begin{rem}
This result has an analog in the RC, known as Kelly's lemma. It states that the number of induced subgraphs of a graph $G$ can be easily obtained from the deck of $G$.\cite{kelly1942isometric} \end{rem}

\begin{proof}
If $H$ is $2$-vertex connected, then any graphlet $\gfo{i}$ of size $\ell < (n-1)$ is induced exactly $(n-\ell)$-times in the $(n-1)$-graphlet degree distribution of $H$, each time obtained by removal of a different vertex from $V(H) \setminus V(U(\gfo{i}))$.
For each vertex $v$ and graphlet $\gfo{i}$ of size less than $(n-1)$, we obtain the entry $D_{v, i}$ of the $(\leq n-2)$-graphlet degree sequence by simply counting how many times $v$ touches $\gfo{i}$ in $(n-1)$-sized graphlets of $H$ rooted at $v$  and divide this number by $(n-|\gfo{i}|)$ to obtain $D_{v,i}$.
\end{proof}
Note it has been shown that if the reconstruction conjecture holds for all $2$-connected graphs, then it holds for all graphs. \cite{yang1988rec2conn} Lemma \ref{l:2-conn-whole-gds-from-n-1} can be easily generalized. 
\begin{lemma}\label{l:k-conn-det-n-k+1}
    Consider a connected graph, $H$, on $n$ vertices. If $H$ is $k$-vertex connected, $(n-k+1)$-graphlet degree sequence of $H$ determines $(\leq n-k)$-graphlet degree sequence of $H$. 
\end{lemma}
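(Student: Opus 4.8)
The plan is to mirror the proof of Lemma \ref{l:2-conn-whole-gds-from-n-1}, replacing the removal of a single vertex by the removal of $k-1$ vertices and running a Kelly-type double-counting argument. The whole argument hinges on one consequence of $k$-vertex connectivity: deleting any $k-1$ vertices from $H$ leaves a connected graph. Hence every induced subgraph of $H$ on $n-k+1$ vertices is connected and therefore a genuine graphlet, so all of its rooted occurrences are recorded in the $(n-k+1)$-gds and none are lost to disconnection. This is exactly the property that fails below $k$-connectivity and that makes the counting exact.

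First I would fix a vertex $v$ and a target graphlet $\gfo{j}$ of size $\ell = |\gfo{j}| \leq n-k$, and count in two ways the pairs $(S, S')$, where $S \subseteq V(H)$ is an $\ell$-set whose induced rooted subgraph $(H[S], v)$ is isomorphic to $\gfo{j}$, and $S' \supseteq S$ is an $(n-k+1)$-set containing $v$. Grouping by the small copy $S$ first: the set $S'$ is obtained from $S$ by adjoining $n-k+1-\ell$ further vertices, equivalently by choosing the $k-1$ deleted vertices among the $n-\ell$ vertices outside $S$; since those $k-1$ deletions leave $H[S']$ connected, every one of the $\binom{n-\ell}{k-1}$ choices yields a valid graphlet. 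Thus this count equals $\binom{n-\ell}{k-1}\cdot \graphcount{H}{\gfo{j}}{v}$.

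Grouping instead by the large set $S'$: each $(n-k+1)$-graphlet occurrence rooted at $v$ contributes the number of $\ell$-subsets through its root that induce $\gfo{j}$, and this internal count $c(\gfo{j},\gfo{i})$ depends only on the isomorphism type $\gfo{i}$ of $(H[S'],v)$, not on the embedding. Summing over all large graphlet types therefore gives $\sum_{i:\,|\gfo{i}|=n-k+1} \graphcount{H}{\gfo{i}}{v}\, c(\gfo{j},\gfo{i})$, every term of which is available from the $(n-k+1)$-gds. Equating the two counts and dividing by $\binom{n-\ell}{k-1}$, which is nonzero because $\ell \leq n-k$ forces $n-\ell \geq k > k-1$, recovers $\graphcount{H}{\gfo{j}}{v}$, and performing this for every $v$ and every $\gfo{j}$ of size at most $n-k$ reconstructs the entire $(\leq n-k)$-gds.

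The main obstacle, or rather the point that must be argued rather than assumed, is establishing that all $\binom{n-\ell}{k-1}$ extensions $S'$ really are connected graphlets; this is precisely where $k$-connectivity enters and is the only place the hypothesis is used. A secondary care point is making the internal counts $c(\gfo{j},\gfo{i})$ well-defined on isomorphism classes and consistent with the paper's orbit-based counting convention (counting vertex subsets in the orbit of the root rather than raw embeddings), so that the double count is bookkept correctly. The base case $k=2$, where $\binom{n-\ell}{1}=n-\ell$ reproduces the division by $(n-|\gfo{i}|)$ in Lemma \ref{l:2-conn-whole-gds-from-n-1}, serves as a sanity check on the constants.
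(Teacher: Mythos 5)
Your proposal is correct and is essentially the paper's own argument: both rest on the single observation that $k$-connectivity makes every deletion of $k-1$ vertices connected, so each rooted occurrence of an $\ell$-vertex graphlet at $v$ lies in exactly $\binom{n-\ell}{k-1}$ graphlets of size $n-k+1$ rooted at $v$, and the count is recovered by summing the occurrences of the small graphlet inside the large ones and dividing by that binomial. Your explicit double count with the coefficients $c(\gfo{j},\gfo{i})$ merely spells out the bookkeeping that the paper's phrase ``summing the occurrences \dots in the $(n-k+1)$-gds'' leaves implicit.
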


\begin{proof}
    When we fix a graphlet $\gfo{i} = \gfroot{G}{v}$ induced over the set $S \subseteq V(G), |S| = \ell$, it is induced in $n-\ell \choose k-1$ graphlets of size $(n-k+1)$ rooted in $v$, because we always include $S$ and choose $(k-1)$ vertices to exclude. By $H$ being $k$-vertex connected, each such exclusion results in a valid graphlet of size $(n-k+1)$ rooted in $v$ as $v \in S$. Hence we can proceed by summing the occurrences of $\gfroot{G}{v}$ in $(n-k+1)$-gds of $H$ to $D_{v,i}$, where $D$ is a matrix of dimensions corresponding to $(\leq n-k)$-gdd of $H$. The resulting graphlet degree distribution is obtained by dividing entry $D_{v,i}$ by $n-\ell \choose k-1$, where $\ell$ is the size of $\gfo{i}$.
\end{proof}

It turns out that $k$-vertex connectivity of a graph $H$ is closely related to the number of $(n-k+1)$-sized graphlets in $(\leq n-1)$-gdd of $H$.
\begin{prop} \label{p:k-conn-graphlet-num}
    Let $H$ be a graph, $|V(H)| = n$. Then $H$ is $k$-vertex connected if and only if 
    $$\sum_{\gfroot{G}{j} \in \Graphlet{n-k+1} } \graphcount{H}{\gfroot{G}{j}}{v} = {{n - 1} \choose {k - 1}}$$ for every $v \in V(H)$.
\end{prop}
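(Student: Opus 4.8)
The plan is to reinterpret the left-hand sum as a count of vertex subsets and then relate the resulting condition to the standard vertex-cut characterisation of $k$-connectivity. Write $m = n-k+1$ for the graphlet size in question. First I would argue that, for a fixed $v$,
$$\sum_{\gfroot{G}{j} \in \Graphlet{m} } \graphcount{H}{\gfroot{G}{j}}{v}$$
equals the number of vertex sets $S \subseteq V(H)$ with $v \in S$ and $|S| = m$ for which the induced subgraph $H[S]$ is connected. The point is that every such $S$ determines a unique isomorphism class of rooted connected induced subgraph (namely $\gfroot{H[S]}{v}$ together with the automorphism orbit of $v$), so it contributes exactly $1$ to exactly one summand; conversely every occurrence counted on the left arises from such an $S$. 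Since graphlets are by definition connected, disconnected subsets simply do not appear in the sum.

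Next I would observe that the total number of $m$-element subsets of $V(H)$ containing $v$ is $\binom{n-1}{m-1} = \binom{n-1}{k-1}$, obtained by choosing the remaining $m-1$ vertices freely from $\gmvh{v}$. Hence the displayed equality holds for a given $v$ precisely when every $m$-subset containing $v$ induces a connected subgraph, and requiring it for all $v \in V(H)$ is equivalent to the statement that every $m$-element subset of $V(H)$ induces a connected subgraph of $H$. Passing to complements, an $m$-subset $S$ is the same as a deletion set $T = V(H) \setminus S$ with $|T| = n - m = k-1$, so the condition becomes: $H - T$ is connected for every $T \subseteq V(H)$ with $|T| = k-1$.

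Finally I would verify that this last condition is equivalent to $k$-vertex connectivity. One direction is immediate from the definition: if $H$ is $k$-connected then deleting any set of fewer than $k$ vertices --- in particular any set of size $k-1$ --- leaves a connected graph. For the converse I would show that connectivity after every deletion of size exactly $k-1$ forces connectivity after every deletion of smaller size. Suppose $H - T_0$ were disconnected for some $|T_0| = j < k-1$, witnessed by a partition of $V(H) \setminus T_0$ into nonempty parts $A, B$ with no edges between them. Using $n \geq k+1$, I would enlarge $T_0$ by adding $k-1-j$ further vertices drawn from $A \cup B$ while keeping at least one vertex in each of $A$ and $B$; this is possible because $n - k + 1 \geq 2$ vertices remain after the deletion. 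The resulting set $T$ has $|T| = k-1$ and $H - T$ is still disconnected, contradicting the hypothesis.

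I expect the counting step to be the conceptual crux --- one must be sure each connected $m$-subset is counted with multiplicity exactly one and that the indexing by isomorphism classes and orbits introduces no over- or under-counting --- while the padding argument in the converse of the connectivity equivalence is the place where the hypothesis $n \geq k+1$ (and the consequent $k \geq 2$, so that graphlets of size $m = n-k+1 < n$ exist at all) must be used with care.
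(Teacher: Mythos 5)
Your proof is correct and takes essentially the same approach as the paper's: both identify the sum of graphlet degrees with the number of $(k-1)$-element deletion sets avoiding $v$ whose removal leaves a connected graph (equivalently, the $(n-k+1)$-subsets containing $v$ that induce connected subgraphs), and compare this count against the maximum $\binom{n-1}{k-1}$. Your explicit padding argument --- enlarging a hypothetical cut of size $j < k-1$ to one of size exactly $k-1$ while preserving disconnection --- carefully justifies a step that the paper's converse direction leaves implicit (the paper only verifies connectivity after deleting exactly $k-1$ vertices), and your attention to the implicit hypotheses $n \geq k+1$ and $k \geq 2$ is likewise more precise than the paper's treatment.
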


\begin{proof}
    
Let $S \subseteq V(G)$, $|S| = k-1$. The fact that $H$ is $k$-vertex connected implies that $H \setminus \{S\}$ is also connected and forms a graphlet of size $(n-k+1)$. As there are ${n-1} \choose {k - 1}$ choices of $S \subseteq V(H)$ such that $v \not\in S$, there are ${n-1} \choose {k - 1}$ graphlets rooted at $v$ of size $(n-k+1)$.

The other way around, if there are ${n-1} \choose {k - 1}$ graphlets rooted at $v$ of size $(n-k+1)$ for any $v \in V(G)$, then $H \setminus \{S\}$ must be connected for any $S \subseteq V(H)$, $|S| = k - 1$ as in order to obtain the desired count, it must form a graphlet of size $(n-k+1)$.
\end{proof}

\begin{cor}\label{c:articulations}
    Let $H$ be a connected graph on $n$ vertices with its $(n-1)$-graphlet degree sequence $D^{n \times m}$. 
    The property
    \begin{equation}\label{e:(n-1)x}
        \sum_{i = 1}^{m} D_{v, i} = n-1
    \end{equation}
    holds for zero, one, or all vertices of $H$.
    These cases correspond to $H$ having more different articulations, precisely one articulation and no articulation, respectively.
    Moreover, if there is precisely one articulation, then it is the vertex having property \eqref{e:(n-1)x}.
\end{cor}

\begin{proof}
    Suppose $H$ is $2$-vertex connected. Then by Proposition \ref{p:k-conn-graphlet-num}, the property (\ref{e:(n-1)x}) holds for all vertices and there is no articulation.

    If the property (\ref{e:(n-1)x}) holds for at least one vertex $v \in V(H)$, but does not hold for a vertex $u \in V(H)$, then $H \setminus \{w\}$ is connected for any $w \neq v$, but $H \setminus \{v\}$ is disconnected. This implies that $v$ is an articulation of $H$ and that $v$ is the only vertex in $H$ for which property (\ref{e:(n-1)x}) holds because $H \setminus \{v\}$ cannot be a graphlet of size $(n-1)$ rooted in any $w \in V(H), w \neq v$ due to not being connected.
    This rules out the possibility that property (\ref{e:(n-1)x}) holds for $k$ vertices of $H$, where $1 < k < n$.

    The last possibility is that property (\ref{e:(n-1)x}) does not hold for any vertex, which is equal to the situation when there are $k$ articulations for $k > 1$. 
    If $G$ is rooted at the articulations, there are $n-1$ remaining vertices that can be removed, out of which $k-1$ are cut-vertices, whose removal leads to the creation of a graphlets strictly smaller than $(n-1)$. It follows that
    the largest possible number of $(n-1)$-sized graphlets is $(n-1 -(k-1)) = (n-k)$ and is obtained by rooting at the articulations. Other vertices have $(n-k-1)$ graphlets of size $(n-1)$. 
\end{proof}

\begin{cor}
    Let $H$ be a $(k-1)$-vertex connected graph on $n$ vertices with its $(n-k+1)$-graphlet degree sequence $D^{n \times m}$. 
    Let $p$ be the number of vertices that satisfy
    \begin{equation}\label{l:prop-p}
        \sum_{i = 1}^{m} D_{v, i} = {n-1 \choose k-1}.
    \end{equation}
    Then $p \in \{0, 1, \dots, k-1, n\}$.
     Furthermore, $p \leq k-1$ occurs when there exist $p$ vertices lying in every vertex cut of size $k$ and these vertices are identified by satisfying \eqref{l:prop-p}. 
    Finally, $p=n$ holds if $H$ is $k$-connected.
\end{cor}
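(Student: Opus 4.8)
The plan is to translate condition (@) into a purely structural statement about small vertex cuts and then read off the admissible values of $p$, following exactly the template of the preceding corollary, which is the case $k=2$.

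First I would interpret the left-hand side of (@). Exactly as in the proof of Proposition~\ref{p:k-conn-graphlet-num}, every graphlet of size $n-k+1$ rooted at $v$ arises by deleting a set $S \subseteq V(H) \setminus \{v\}$ with $|S| = k-1$ for which $H \setminus S$ is connected, and distinct sets $S$ yield distinct graphlets. Hence $\sum_{i=1}^{m} D_{v,i}$ equals the number of $(k-1)$-element sets $S$ avoiding $v$ with $H \setminus S$ connected. Since there are $\binom{n-1}{k-1}$ choices of such $S$ in total, condition (@) holds for $v$ if and only if \emph{no} $(k-1)$-element set $S$ with $v \notin S$ disconnects $H$. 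Because $H$ is $(k-1)$-connected, any $(k-1)$-set whose deletion disconnects $H$ is necessarily a vertex cut of size exactly $k-1$; thus (@) holds for $v$ if and only if $v$ lies in every vertex cut of $H$ of size $k-1$.

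Next I would turn this equivalence into a formula for $p$. Writing $\mathcal{C}$ for the family of all size-$(k-1)$ vertex cuts of $H$, the set of vertices satisfying (@) is precisely $\bigcap_{C \in \mathcal{C}} C$, with the convention that this intersection is all of $V(H)$ when $\mathcal{C} = \emptyset$. This splits into two regimes. If $\mathcal{C} = \emptyset$, then $H$ has no separator of size $k-1$, which together with $(k-1)$-connectivity forces $H$ to be $k$-connected; then (@) holds vacuously for every vertex and $p = n$ — in particular this occurs whenever $H$ is $(k+1)$-connected. If $\mathcal{C} \neq \emptyset$, fix any $C \in \mathcal{C}$; then $\bigcap_{C' \in \mathcal{C}} C' \subseteq C$, so $p \le |C| = k-1$, and the $p$ vertices in question are exactly those lying in every size-$(k-1)$ cut, identified by satisfying (@). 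Combining the two regimes yields $p \in \{0,1,\dots,k-1,n\}$, as claimed.

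The argument carries no deep obstacle — it mirrors the $k=2$ corollary — so the work lies in making the reduction airtight. The step I would be most careful about is the first: one must verify that summing $D_{v,i}$ over all graphlet types of size $n-k+1$ counts connected $(k-1)$-deletions bijectively, using that a connected induced $(n-k+1)$-subset containing $v$ places $v$ in a single automorphism orbit and is therefore counted exactly once. I would also check the boundary between the two regimes, in particular that the vacuous-intersection convention correctly yields $p=n$ and that $(k-1)$-connectivity guarantees every disconnecting $(k-1)$-set is a genuine cut of size exactly $k-1$, so that the two regimes partition all cases.
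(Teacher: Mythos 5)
Your proof is correct and takes essentially the same route as the paper's: both translate condition (@), via the counting argument of Proposition~\ref{p:k-conn-graphlet-num}, into the statement that $v$ satisfies (@) if and only if $v$ lies in every vertex cut of size $k-1$, and then bound the number of such vertices by the size of a single cut. Your formulation of the (@)-vertices as $\bigcap_{C \in \mathcal{C}} C$ merely streamlines the paper's explicit case analysis (no cut, one cut, two disjoint cuts, no two disjoint cuts) into one argument, which is a cosmetic rather than substantive difference.
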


\begin{proof}
    
    To obtain a graphlet of size $n-k+1$, $k-1$ vertices must be removed from $H$.
    Suppose that the given graphlet is rooted in $v$. Then those $k-1$ vertices are being removed from $V(H)\setminus {v}$, that is from the set of $n-1$ vertices.
    
    If there is no vertex cut of size $k-1$, then the removal of any set $S\subseteq V(G), |S| = k-1$ results in a connected graphlet and property (\ref{l:prop-p}) holds for every vertex as claimed in Proposition \ref{p:k-conn-graphlet-num}.
    
    Note that if there is a vertex cut $S$ of size $k-1$ in $H$, then the property (\ref{l:prop-p}) is invalidated for any $v \in V(G) \setminus S$, because $H \setminus S$ is not a graphlet of size $n-k+1$ rooted in $v$, as it is not connected.

    If there exist two vertex cuts $R,S$ of size $(k-1)$ in $H$ such that $R\, \cap\, S = \emptyset$, then property (\ref{l:prop-p}) is invalidated for every $v \in H$, because $\forall v \in V(H): v \notin R$ or $v \notin S$. 

    The only remaining case is that there are no two disjoint vertex cuts of size $(k-1)$. This implies that there are $j \leq k-1$ vertices $v_1, \dots, v_j$ such that $v_i \in S$ for every $(k-1)$-sized vertex cut $S$ of $H$, $\forall i \in [j]$. Due to being in every such cut and by $H$ having no vertex cut of size $(k-2)$, the property (\ref{l:prop-p}) is preserved for $v_1, \dots, v_j$. For $v \in V(G) \setminus \{v_1, \dots, v_j\}$, there always exists a $(k-1)$-sized vertex cut $S$ such that $v \in S$ (otherwise $v = v_i$ for some $i \in [j]$), which destroys property (\ref{l:prop-p}) for $v$.
\end{proof}

\section{Reconstructing graphs from $(\leq n-1)-gdd$}\label{s:rec}
Assuming that we know graphlets up to the size of $(n-1)$, can we reconstruct the underlying graph? This question is closely related to the reconstruction conjecture, a classical open problem in graph theory. 

We restate the conjecture as proposed by Kelly (1957) \cite{kelly1957congruence}: 

\begin{conj}
    Let $G$ be a graph on $n$ vertices. Let us denote $G_v$ a subgraph of $G$ formed by deleting vertex $v$ from $G$. Let $D(G) = \{G_v\,|\ v \in V(G)\}$ be a multiset of all vertex-deleted subgraphs of $G$, called deck. Then any two graphs $G$ and $H$ on $n>2$ vertices with equal multisets $D(G)$ and $D(H)$ are isomorphic. 
\end{conj}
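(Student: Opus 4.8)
The plan is to follow the classical strategy of reducing the Reconstruction Conjecture to its hardest core and then to attack that core through the graphlet machinery developed above. First I would dispose of the easy cases: disconnected graphs are reconstructible by a Kelly-type counting argument, since the number of copies of each proper subgraph, and hence the component structure, is determined by the deck \cite{kelly1942isometric}; separable connected graphs can then be handled through their block--cut-tree. By the reduction of Yang \cite{yang1988rec2conn} it therefore suffices to establish the conjecture for $2$-vertex-connected graphs. This is where the present framework enters: Lemma \ref{l:2-conn-whole-gds-from-n-1} shows that for a $2$-connected $H$ the deck is recoverable from the $(n-1)$-gdd, and conversely the $(n-1)$-gdd is determined by the deck via Kelly's lemma, so for $2$-connected graphs the two reconstruction problems are interchangeable and I may work with whichever object is more convenient.

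Second, assuming $G$ and $H$ share a deck, equivalently a $(\leq n-1)$-gdd, I would reconstruct as much invariant data as possible: the order, the size, the degree sequence, and by Kelly's lemma the number of induced copies of every proper subgraph. In graphlet language this is precisely the root-summed and orbit-resolved information extracted in Section \ref{s:conn}, and Proposition \ref{p:k-conn-graphlet-num} additionally pins down the vertex connectivity. The aim of this stage is to shrink the space of candidate graphs sharing the deck of $G$ to a tightly controlled family.

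Third, I would try to upgrade partial reconstruction to full reconstruction by locating an asymmetric substructure. Since almost all graphs are asymmetric \cite{erdosrenyi1963asymmetric}, in the generic case $G$ possesses a vertex-deleted asymmetric subgraph $\Hasym$; I would use the rooted, orbit-resolved counts in the $(n-1)$-gdd to recognise $\Hasym$ and to determine uniquely how the deleted vertex reattaches to it, thereby fixing $G$ up to isomorphism. This is exactly the mechanism that succeeds for the restricted families treated in Section \ref{s:rec}.

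The hard part, and the reason the statement is a conjecture rather than a theorem, is the final fully general gluing step. Even after all proper-subgraph counts and the connectivity are known, two non-isomorphic $2$-connected graphs could in principle share a deck, and no invariant built from counts of proper substructures is known to separate them in general; the asymmetry-based argument collapses precisely when every vertex-deleted subgraph is highly symmetric, or when the reattaching vertex cannot be localised among several automorphism orbits. Closing this gap uniformly over all $2$-connected graphs is equivalent to the full conjecture, so I expect this step to be the genuine obstacle. The realistic contribution of the graphlet viewpoint is thus to \emph{enlarge}, rather than to exhaust, the class of graphs for which the gluing can be carried out.
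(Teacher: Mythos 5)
This statement is the Reconstruction Conjecture itself, which the paper deliberately leaves as a conjecture and does not prove; no proof exists in the paper or in the literature, so any complete ``proof'' would be wrong and any honest attempt must leave a gap. Your proposal does leave the gap — and to your credit you name it explicitly in the final paragraph, admitting that the gluing step over all $2$-connected graphs ``is equivalent to the full conjecture.'' That admission makes the proposal a programme, not a proof: the plan reduces the conjecture to itself, which is circular. So the verdict is: genuine gap, and an unavoidable one, since the target is open.

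Beyond that structural issue, two concrete steps in your sketch are unsound even as reductions. First, you claim that for $2$-connected graphs the deck and the $(n-1)$-gdd are ``interchangeable,'' with the direction deck~$\Rightarrow$~gdd supplied by Kelly's lemma. Kelly's lemma yields only \emph{global, unrooted} counts of proper subgraphs; it does not tell you how those counts distribute over vertices, i.e.\ it does not recover the rooted, per-vertex entries $D_{v,i}$. The paper itself states that this converse direction ``is significantly more difficult'' and leaves it open, so you may not pass freely between the two reconstruction problems — and consequently the graphlet results of Section~\ref{s:rec} (which assume the gdd is given) cannot be imported into the deck setting. Second, your ``generic case'' step overstates Theorem~\ref{t:asym}: that theorem does not apply whenever some vertex-deleted subgraph $\Hasym$ is asymmetric; it additionally requires condition $(*)$ governing how the several vertices $v^1,\dots,v^k$ with $\gmvh{v^i} \cong \Hasym$ interact (e.g.\ via the twin condition of Observation~\ref{o:neigh}). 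The Erd\H{o}s--R\'enyi almost-all-asymmetric fact gives you an asymmetric card in the generic case, but not condition $(*)$, and in any event it says nothing about the non-generic graphs on which the conjecture must also be verified. Your reduction to the $2$-connected case via Yang's result and the handling of disconnected graphs are consistent with the cited literature, but the core of the argument remains, as you concede, the conjecture itself.
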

The requirement $n>2$ is necessary as both graphs on $2$ verties, an edge and a pair of non-adjacent vertices have the same deck consisting of a pair of two $K_1$'s. However, this condition is not necessary if we restrict ourselves to connected graphs.
In the following subsections, we discuss the properties and classes of graphs reconstructible from the $(\leq n-1)$-gdd.

\subsection{Graph properties reconstructible from $(\leq n-1)$-gdd}
There are several properties of a graph obtainable from the $(\leq n-1)$-gdd of the graph. In this section, we provide several basic results that allow us to prove reconstruction results in the latter parts of this article.

\begin{obs}\label{o:edges}
    Let $G$ be a graph and $D$ be the $(\leq n-1)$-gdd of $G$.
    Then the number of edges $e=|E(G)|$ of $G$ is reconstructible from $D$.
\end{obs}
\begin{proof}
    We utilize that $\deg_G(v) = D_{v,0}$ and that $\sum_{v \in V(G)} \deg_G(v) = 2|E(G)|$.
\end{proof}

Reconstructing the number of edges of $G$ allows us to reconstruct the degree of a missing vertex $x$ in any graphlet of size $(n-1)$.
\begin{cor}\label{c:degree}
Let $G$ be a graph, $D$ be the $(\leq n-1)$-gdd of $G$ and $\gf{G}$ be an $(n-1)$-sized graphlet with $U(\gf{G}) \cong G \setminus \{x\}$ for a fixed $x \in V(G)$. Then $\deg_G(x)$ is reconstructible from $D$.
\end{cor}
\begin{proof}
    The degree of $x$ is obtained by subtracting $E(U(\gf{G}))$ from $E(G)$.
\end{proof}

The following lemma shows that it is possible to determine whether there is an edge between the root of a graphlet of size $(n-1)$ and the missing vertex.
\begin{lemma}\label{l:root-missing-edge}
    Let $G$ be a graph, $D$ be the $(\leq n-1)$-gdd of $G$ and $\gf{G} = \gfroot{G \setminus \{x\}}{r}$ be a fixed $(n-1)$-sized graphlet for some $x \in V(G)$. 
    Then we can reconstruct whether $rx \in E(G)$.
    Especially, the number of edges between the automorphism orbit $o$ containing $r$ and $x$ is reconstructible from $D$.
\end{lemma}

\begin{proof}
    Let $\gf{G} = \gfroot{G \setminus \{x\}}{r}$.
    Recall that $U(\gfo{0}) = K_2$ and so $D_{0,r} = \deg_G(\gfo{0},r) = \deg_G(r)$.
    If $\deg_G(r)$ is the same as the degree of the root in $\gf{G}$, then $xr \notin E(G)$. Otherwise, the latter is one less than $\deg_G(r)$ and $xr \in E(G)$.
    Suppose $\vartheta(\gf{G}) = k$.
    All vertices $w$ lying in the same orbit of $G \setminus \{x\}$ have a nonzero value in $D_{w,k}$. For each $w$, we find out whether $wx \in E(G)$. The number of edges detected equals to the number of edges between $x$ and $o$.
\end{proof}

Given a graphlet $\gfroot{H}{r}$, we say that a vertex $v$ lies in  {\it level} $i$ from the root $r$, if $\dist_H(v,r) = i$. All vertices at level $i$ are denoted as $L_i(\gfroot{H}{r})$.

\begin{obs}\label{o:distances}
    Let $v$ a vertex $\in V(G)$ and its graphlet degree sequence be given. Then for each graphlet $\gfo{i} = \gfroot{G \setminus \{v\}}{r}$ of size $(n-1)$ we can infer
    \begin{itemize}
        \item  the distance between $r$ and $v$ in $G$.
        \item  the number of shortest paths between $r$ and  $v$.
        \item the structure of the subgraph of $G$ induced by $\{\bigcup_{i \in [\dist(r,v)-1]} L_i(G)\}$, i.e., the graph induced by all vertices that are closer to $r$ than $v$.
    \end{itemize}
\end{obs}

\begin{proof}
 Let $\dist(r,v) = k$ and let $\mathbf{P_j}$ be the graphlet weakly isomorphic to a path of length $j$ that is rooted at its endpoint.
    Then for $j \in [k-1]$, 
    $$ \graphcount{U(\gfo{i})}{\mathbf{P_j}}{r} = \graphcount{G}{\mathbf{P_j}}{r}.$$
    But then, those counts do not match for $k$, because the paths of length $k$ between $r$ and $v$ is present in $G$, but not in $U(\gfo{i})$. So $\dist(r,v)$ is the smallest $k$ such that the equality above does not hold.
    
    The difference between those two numbers then gives the number of shortest paths between $r$ and $v$.
    The precise structure of the subgraph of $G$ around $r$ up to distance $k-1$ is given by taking the structure of $\gfo{i}$ up to distance $k-1$ from its root $r$.
\end{proof}

In particular, if $v$ is not a universal vertex (which will be dealt with in Theorem \ref{t:univ-vertex}), then  the structure of its neighbourhood, $L_1(H)$, is reconstructible.

\begin{obs}\label{o:subgraphs-G}
Let $G$ a graph and $D$ its $(\leq n-1)$-graphlet degree distribution. Then for each connected induced proper subgraph $S$ of $G$, we can count the number of induced occurrences of $S$ in $G$ from $D$.
\end{obs}
\begin{proof}
Fix $S$, an induced proper subgraph of $G$. The number of occurrences of $S$ in $G$ is obtained by summing all the occurrences of graphlets from $\Graphlet{[S]}$ in $D$ and dividing this number by $|V(S)|$, because each occurrence of $S$ in $G$ 
generates $|V(S)|$ graphlets from $\Graphlet{[S]}$ - one for each rooting of $S$.  
\end{proof}

The following result is basically a graphlet analogue of the result of Lemma 1.1 from \cite{Bondy1969OnUC}. 
\begin{obs}\label{o:subgraphs}
Let $G$ a graph and $D$ its $(\leq n-1)$-graphlet degree distribution. Fix a graphlet $\mathbf{G}$ of size $n-1$ occurring in $D$ such that $U(\gf{G}) = G \setminus \{x\}$ for a fixed $x \in V(G)$.
Then it is possible, for any subgraph $S$ of $G$, to enumerate the number of occurences of $S$ in $G$ such that $x \in S$.
\end{obs}

\begin{proof}
We count the occurrences of $S$ in $G$ using Observation \ref{o:subgraphs-G}.
For graphlet $\gf{G}$, the occurrences of $S$ are counted directly in the graph $U(\gf{G})$.
Clearly, the number of subgraphs of $G$ isomorphic to $S$ containing $x$ is obtained by subtracting
the numbers obtained above.
\end{proof}

Suppose $H$ is a $2$-connected graph, so its deck consists of connected graphs of size $n-1$. Then we can, using Observation \ref{o:subgraphs-G} obtain the deck of $H$.

\begin{lemma}\label{l:deck-from-gdd}
    Given a 2-vertex connected graph $H$ of size $n$ and its $(n-1)$-gdd $D$, we can determine its reconstruction deck of $H$ as defined by Kelly (1957). 
\end{lemma}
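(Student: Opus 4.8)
The plan is to show that the $(n-1)$-gdd determines, for every isomorphism type $F$ of connected graph on $n-1$ vertices, the number $m_F$ of vertices $w$ with $\gmvh{w}\cong F$; the multiset of these types weighted by $m_F$ is exactly the deck. First I would record the structural consequence of $2$-connectivity: deleting any single vertex of $H$ leaves a connected graph, so every $\gmvh{w}$ is a connected graph on $n-1$ vertices and is therefore the underlying graph of some size-$(n-1)$ graphlet appearing in the $(n-1)$-gdd. Conversely, any connected induced subgraph of $H$ on $n-1$ vertices omits exactly one vertex $w$ and hence equals $\gmvh{w}$. Thus the size-$(n-1)$ graphlets are in bijection with the single-vertex deletions, and reconstructing the deck reduces to reading off the multiplicities $m_F$.

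Next I would set up the counting identity. Consistently with the Kelly-type counting used in Lemma~\ref{l:2-conn-whole-gds-from-n-1}, the entry $\graphcount{H}{\gfroot{F}{r}}{v}$ counts the induced copies of $F$ in $H$ in which $v$ occupies the automorphism orbit of the root $r$. For a fixed $F$ of size $n-1$, each $w$ with $\gmvh{w}\cong F$ yields exactly one induced copy, and the vertices $v$ occupying the orbit of $r$ in that copy are precisely the $|\mathrm{orbit}_F(r)|$ vertices of that orbit; summing over roots gives
$$\sum_{v\in V(H)}\graphcount{H}{\gfroot{F}{r}}{v}=m_F\cdot|\mathrm{orbit}_F(r)|.$$
Since the graphlet indices $j$ with $U(\gfo{j})\cong F$ correspond exactly to the automorphism orbits of $F$, and these orbits partition $V(F)$, summing the previous identity over all such $j$ collapses the orbit sizes to $n-1$:
$$\sum_{j\,:\,U(\gfo{j})\cong F}\ \sum_{v\in V(H)}D_{v,j}=m_F\,(n-1).$$

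I would then conclude by solving $m_F=\frac{1}{n-1}\sum_{j\,:\,U(\gfo{j})\cong F}\sum_{v}D_{v,j}$, where every quantity on the right is read directly from the gdd and the type $F=U(\gfo{j})$ is recovered from the graphlet ordering $\vartheta$. Letting $F$ range over all connected graphs on $n-1$ vertices recovers the full multiset of vertex-deleted subgraphs, i.e.\ the deck.

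The hard part will be the automorphism bookkeeping rather than any deep structural argument: a single isomorphism type $F$ typically appears under several graphlet indices (one per root orbit), and each vertex-deleted subgraph is counted once for every vertex of the orbit in question, so I must sum over all orbits to obtain the clean normalisation by $(n-1)$. The second point that must be invoked explicitly is $2$-connectivity itself: it is exactly what guarantees that every deletion $\gmvh{w}$ remains connected and thus survives as a size-$(n-1)$ graphlet in the gdd; if $H$ had a cut vertex some $\gmvh{w}$ would be disconnected and hence invisible in the $(n-1)$-gdd, causing the count above to undercount $m_F$.
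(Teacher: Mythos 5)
Your proof is correct. The paper in fact offers no proof of this lemma at all---it is asserted as ``clear,'' followed only by a remark explaining why $2$-connectivity is needed---and your argument rigorously fills in exactly that implicit idea: $2$-connectivity guarantees every card $\gmvh{w}$ is connected and hence visible as a size-$(n-1)$ graphlet, and your orbit-summed double count
$m_F = \frac{1}{n-1}\sum_{j\,:\,U(\gfo{j})\cong F}\sum_{v} D_{v,j}$
is the correct normalisation that recovers the multiplicity of each card, which is precisely the bookkeeping the paper glosses over.
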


\begin{proof}
    Let $\mathcal{H}_{n-1}$ be the set of all induced subgraphs of $H$ of size $n-1$. Note that $\mathcal{H}_{n-1}$ is reconstructible from $D$ because for every $F \in \mathcal{H}_{n-1}$, 
    $\sum_{\mathbf{F} \in \Graphlet{[F]}} \sum_{v \in V(H)} D_{v,\vartheta(\mathbf{F})} > 0$.

    Once $\mathcal{H}_{n-1}$ is obtained, it suffices to count multiplicity of each $F \in \mathcal{H}_{n-1}$ using Observation \ref{o:subgraphs-G}.
\end{proof}

The $2$-vertex connectivity is needed, because in order to detect the occurrence in $H$, we need that every subgraph of size $n-1$ corresponds to a graphlet of size $n-1$, which happens only if the respective subgraph is connected. 
Otherwise, only the connected component of the root is contained in the gdd of $H$ and information about the structure of the remaining parts is lost.

Due to this result, any class of $2$-connected graphs that is reconstructible from the deck is also reconstructible from the $(\leq n-1)$-gdd.

The opposite direction, whether we are able to determine $(n-1)$-gds from the reconstruction deck, is significantly more difficult. Clearly, if the conjecture holds, then we are able to construct the original graph $H$ and determine its $(n-1)$-gds. What is the exact relation between reconstruction deck and graphlet degree distribution, if the conjecture does not hold, is not clear. 

\subsection{Graphlet reconstruction of graphs containing vertex of high or low degree}

Graphs containing a {\it universal vertex}, that is, a vertex connected to all vertices of the given graph, are easily reconstructible.
\begin{thm}\label{t:univ-vertex}
    Let $H$ be a graph containing a universal vertex $v$. Then $H$ can be reconstructed from its $(\leq n-1)$-gdd $D$.
\end{thm}
\begin{proof}
    From $D_{*,0}$, we identify the row corresponding to the universal vertex $v$. Then take any graphlet $\gfo{k}$ of size $n-1$ rooted at any vertex $u \neq v$, such that
    $|E(U(\gfo{k}))| = |E(H)|-n+1$, $|E(H)|$ being reconstructible from $D$ by Observation \ref{o:edges}.
    The reconstruction is done by adding a universal vertex to $U(\gfo{k})$.
\end{proof}

It is also possible to reconstruct graphs with almost-universal vertex.
\begin{thm}\label{t:almost-univ-vertex}
    Let $H$ be a graph on $n\geq 3$ vertices containing a vertex $v$ of degree $n-2$. Then $H$ can be reconstructed from its $(\leq n-1)$-gdd $D$.
\end{thm}
\begin{proof}
    Let $u$ be the only non-neighbour of $v$, i.e. the unique vertex such that $uv \notin E(H)$.
    The key step is to find the graphlet of size $(n-1)$ with the underlying graph $H \setminus \{v\}$ rooted at $u$ for any vertex $v$ with $\deg_H(v) = n-2$.

    Let us reconstruct $|E(H)| = e$ using Observation \ref{o:edges}. Then find the set of $(n-1)$-sized graphlets $\mathcal{H}_{n-1}$ occurring in $H$. Using $\vartheta$ to see their structure, define $\mathcal{H}_{n-1}^{e-n+2} \subseteq \mathcal{H}_{n-1}$ the subset of them having $|E(H)|-n+2$ edges. 
    These are the graphlets with the underlying graphs $H \setminus \{v\}$ for all vertices $v$ with $\deg_H(v) = n-2$. Fix any of them and denote it by $\gfo{k}$.

    The last step is to find a weakly isomorphic graphlet $\gfo{\ell} = \gfroot{H \setminus \{v\}}{r}$ such that $\deg_H(r) = \deg_{H \setminus \{v\}}(r)$, because then $r = u$ is the unique vertex such that $rv \notin E(H)$.
    The reconstruction is finished by adding a new vertex $v$ connected to all vertices of $\gfo{\ell}$ but its root.
\end{proof}

On the other hand, it is also easy to reconstruct graphs with some low degree vertices.
A {\it pendant vertex} (or a {\it leaf}) is a vertex adjacent to exactly one other vertex, i.e. its degree is one.

\begin{thm}\label{t:pendant-vertex}
    Let $H$ be a graph containing a pendant vertex $v$. Then $H$ can be reconstructed from its $(\leq n-1)$-gdd.
\end{thm}
\begin{proof}
    The key step is to find a graphlet of size $(n-1)$ with the underlying graph $G \setminus \{v\}$ rooted at $q$ for any pendant vertex $v$, such that $qv$ is an edge.
    
    At first, we reconstruct the number of edges $e$ of $H$ using Observation \ref{o:edges}.
    Let us denote $\mathcal{H}_{n-1}$ the set of graphlets of size $(n-1)$ that occur in the nonzero columns of the gdd of $H$.
    From the numbering $\vartheta$, the structure and rooting of each $\mathbf{H} \in \mathcal{H}_{n-1}$ is known. Using this information, we build the set $\mathcal{H}_{n-1}^{e-1} \subseteq \mathcal{H}_{n-1}$ of the $(n-1)$-sized graphlets with the underlying graphs having $|E(H)|-1$ edges. These are the graphlets with the underlying graphs $H \setminus \{v\}$ for all pendant vertices $v$.
    For each graphlet $\gfo{i} \in \mathcal{H}_{n-1}^{e-1}$, we compare the degree of the root inside $\gfo{i}$ and its degree in $H$, again by checking $D_{v,0}$ for each root $v$, i.e., we check $D_{v,0}$ in the row containing nonzero value $D_{v,i}$ for the current graphlet $\gfo{i}$.
    Once we find graphlet, where the degree of the root in $H$ is larger than in the graphlet, we have found a graphlet with underlying graph $H \setminus \{v\}$, where $\deg_H(v)=1$, rooted at vertex $q$ adjacent to $v$ in $H$. The reconstruction is finished by adding a pendant vertex adjacent to the root $q$.
\end{proof}

\subsection{Reconstructing separable graphs}
A graph is {\it separable} if it contains a cut vertex.
A {\it block} is a maximal subgraph that is not separable.
It is already known that separable graphs without end vertices, i.e., vertices of degree one, are reconstructible from their decks.\cite{Bondy1969OnUC} 
However, the deck can be easily obtained from the graphlet degree distribution only in the case of $2$-connected graphs, as has been shown in Lemma \ref{l:deck-from-gdd}.
In this section we will show that separable graphs are also reconstructible from their $(\leq n-1)$-graphlet degree distribution.
Note that Theorem \ref{t:pendant-vertex} already shows that any graph containing a vertex of degree one can be reconstructed from its $(\leq n-1)$-gdd.

\begin{thm}\label{t:separable-graphs}
    Let $G$ be a separable graph of order $n \geq 3$. Then $G$ is reconstructible from its $(\leq n-1)$-graphlet degree distribution.
\end{thm}

\begin{proof}
    If $G$ contains a vertex of degree one, it is reconstructible from its $(\leq n-1)$-gdd using Theorem \ref{t:pendant-vertex}.
    Now $G$ has minimal degree $\delta(G) \geq 2$.

    We proceed in the following steps:
    \begin{enumerate}
        \item Detect the index of row corresponding to a cut vertex $c$ in $G$. As has been already noted in the proof of Corollary \ref{c:articulations}, if there are $k$ cut vertices, then they can be identified by having $(n-k)$ graphlets of size $(n-1)$ in their graphlet degree sequence (as opposed to other vertices that have $(n-k-1)$ graphlets of size $(n-1)$ in their graphlet degree sequence).
        \item Identify the structure of $B_1, \dots, B_\ell,\ |B_1| \leq |B_2| \leq \dots \leq |B_\ell|$, the blocks of $G$.
        By having $\delta(G) \geq 2$, it must be the case $|B_i| \geq 3\ \forall i \in [\ell]$.
        Let $\mathcal{G}_{n-1}^c$ be the set of graphlets rooted at $c$ obtained from $G$ by removing a vertex $x \in V(G) \setminus \{c\}$.
        For each $\gf{G} \in \mathcal{G}_{n-1}^c$ there are blocks $B'_1, \dots ,B'_\ell, \ |B'_1| \leq |B'_2| \leq \dots \leq |B'_\ell|$ such that there exists a mapping $\varepsilon: [\ell] \rightarrow [\ell]$ for which 
        $B'_{\varepsilon(j)} \cong B_j$ for all $j \in [\ell], j \neq i$ and $B'_{\varepsilon(i)} \cong B_i \setminus \{x\}$ for some $x \in V(B_i)$.
        Note that this mapping is necessary, as the change in the size of the block $B_i$ after the removal of a vertex may change the ordering of the blocks.
        We say that blocks $B'_j$ with $j\neq i$ are {\it whole}, whereas block $B'_i$ is {\it lacking}.

        We pick a graphlet $\gf{G} \in \mathcal{G}_{n-1}^c$ containg a block $B'_1$ with the smallest size $s$.
        Evidently, this block must be lacking, so $B'_2, \dots, B'_\ell$ are whole. Thus, we have the structure of almost every block of $G$ apart from the smallest one.
        Without loss of generality, $\{B'_2, \dots, B'_\ell\} = \{B_2, \dots, B_\ell\}$ and $B'_1 \subset B_1$.

        Suppose $U(\gf{G}) \cong G \setminus \{x\}$.
        We reconstruct $\deg(x)$ using Corollary \ref{c:degree}.
        To find the structure of $B'_1 \cup \{x\} \cong B_1$, we take the set $\mathcal{B}_1$ containing every possible graph in the form $B'_1 \cup \{y\}$ with $\deg(x)$ edges between $y$ and $B'_1$.
        For each such graph $S \in \mathcal{B}_1$, we compare the number of occurrences of $S$ in $G$ and in $U(\gf{G})$.
        As $\{B'_2, \dots, B'_\ell\} = \{B_2, \dots, B_\ell\}$, the occurrences in blocks different from $B_1$ and $B'_1$ cancel each other out and we obtain the difference between the number of occurrences of $S$ in $B'_1$ and $B_1$.
        Note that $B'_1$ does not contain any $S \in \mathcal{B}_1$,
        because $|B'_1| < |S|\ \forall S \in \mathcal{B}_1$.
        There must be an $S \in \mathcal{B}_1$ such that $S \cong B_1$, because $B_1$ is in the form $B'_1 \cup \{x\}$ and $\mathcal{B}_1$ contains all possible connections between $x$ and $B'_1$.
        If there were $S_1,S_2 \in \mathcal{B}_1$ such that $S_1 \ncong S_2$ and $B_1 \cong S_1$, $B_1 \cong S_2$, then $S_1 \cong B_1 \cong S_2$, a contradiction.
        This means that there exists a unique $S \in \mathcal{B}_1$ that has a different number of occurrences in $G$ and $U(\gf{G})$ and $S \cong B_1$, finishing the reconstruction of $G$.    
    \end{enumerate}
    \end{proof}

\subsection{Graphlet reconstruction of graphs with vertex-deleted asymmetric or almost-asymmetric subgraphs}

When using graphlet degree distribution instead of a deck to reconstruct the original graph, it is possible to utilize the fact that graphlets are rooted. In particular, the rooting allows us to enumerate the number of edges connecting the root and different automorphism orbits (see Lemma \ref{l:root-missing-edge}). In an {\it asymmetric} (or rigid) graph, all automorphism orbits have size one and so this information is sufficient to reconstruct the structure of the graph.
We define an {\it almost-asymmetric graph} to be any graph containing exactly one automorphism orbit of size larger than one.  

\begin{thm}\label{t:asym}
Let $H$ be a $2$-vertex connected graph containing a vertex $v \in V(H)$ for which $H \setminus \{v\} = \Hasym$ is asymmetric or almost-asymmetric such that the following condition holds:

\begin{itemize}
    \item[$(*)$] If there are vertices $v^1, \dots, v^k$, $k \geq 2$, such that $\gmvh{v^i} \cong \gmvh{v} \cong \Hasym\ \forall i \in [k]$,  then every $w \in V(H)$ lies in the same orbit of $\Hasym$ regardless of which $\{v^i\ |\ i \in [k], v^i \neq w\}$ have been removed.
\end{itemize}

Then $H$ is reconstructible from its $(n-1)$-graphlet degree distribution.
\end{thm}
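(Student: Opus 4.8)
The plan is to reconstruct $H$ from its $(n-1)$-gdd by first recovering auxiliary data and then pinning down every edge. Since $H$ is $2$-vertex connected, Lemma \ref{l:2-conn-whole-gds-from-n-1} lets me recover the full $(\leq n-2)$-gdd, and in particular every degree $\deg_H(u)$ (as the count of the single-edge graphlet rooted at $u$); the preceding lemma also yields the deck $\{\gmvh{x} \mid x \in V(H)\}$ as a multiset of abstract graphs. I would then single out the isomorphism type $\Hasym$: it is a rigid graph on $n-1$ vertices occurring in the deck, and rigidity is a property of the abstract graph, hence detectable. Let $v^1, \dots, v^k$ be the vertices whose deletion yields a copy of $\Hasym$. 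Since $H$ is $2$-connected each $\gmvh{v^i}$ is connected and so appears as an $(n-1)$-graphlet, and by rigidity the isomorphism $\phi_i \colon \Hasym \to \gmvh{v^i}$ is \emph{unique}.

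The heart of the argument is extracting a position map $\psi \colon V(H) \to V(\Hasym)$. For a root $r$ of $\Hasym$ and a vertex $u$, the count $\graphcount{H}{\gfroot{\Hasym}{r}}{u}$ equals the number of indices $i$ with $v^i \neq u$ and $\phi_i^{-1}(u) = r$. When $k \geq 2$, condition $(*)$ says precisely that $\phi_i^{-1}(u)$ is independent of the (valid) choice of $i$; I set $\psi(u)$ to be this common value. Then $\graphcount{H}{\gfroot{\Hasym}{r}}{u}$ is nonzero for the single root $r = \psi(u)$ and zero otherwise, so $\psi(u)$ is read directly off the gdd as the unique root index with a nonzero entry. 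Summing these counts over $r$ gives $k - [u \in \{v^1,\dots,v^k\}]$, which simultaneously identifies $k$ and the special set $S = \{v^1, \dots, v^k\}$ (the vertices realizing the smaller sum). For $k=1$ the same reading works, with $v = v^1$ recognized as the unique vertex whose $\Hasym$-rooted counts all vanish.

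With $\psi$ and $S$ in hand I reconstruct the edges. For any pair $\{w, w'\}$ admitting an index $i$ with $v^i \notin \{w, w'\}$, both vertices sit inside the copy $\gmvh{v^i}$ and $\phi_i$ is an isomorphism, so $ww' \in E(H)$ if and only if $\psi(w)\psi(w') \in E(\Hasym)$; this fixes all edges except those among vertices of $S$ that no copy omits. A short counting argument (the restriction of $\psi$ to $V(H)\setminus\{v^i\}$ is a bijection onto $V(\Hasym)$, while $|V(H)| = n > n-1 = |V(\Hasym)|$) forces $k \leq 2$, so the only undetermined pair is $\{v^1, v^2\}$ when $k=2$, respectively the edges incident to $v$ when $k=1$. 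These remaining adjacencies I settle by degrees: for a vertex $w$ lying in a fixed copy $\gmvh{v^i}$ one has $\deg_H(w) = \deg_{\Hasym}(\psi(w)) + [w v^i \in E(H)]$, and since $\deg_H(w)$ is known and $\deg_{\Hasym}(\psi(w))$ is read from the abstract $\Hasym$, the bracket, hence the edge $wv^i$, is forced. Because every step is a function of the gdd alone, any graph with the same $(n-1)$-gdd produces the same $H$, establishing reconstructibility.

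I expect the main obstacle to be the multiple-copy case: without condition $(*)$ a vertex could occupy different positions of $\Hasym$ across different copies, so several $\Hasym$-rooted counts at that vertex would be nonzero and the position map $\psi$ would be ill-defined. Verifying that $(*)$ is exactly the hypothesis collapsing these into a single readable position, and that the handful of edges not covered by $\psi$ are nonetheless pinned down by the degree identity, is the crux of the proof.
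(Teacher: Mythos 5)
Your proposal is correct and follows essentially the same route as the paper: identify the rigid graph $\Hasym$ from the gdd, use condition $(*)$ to give every vertex a unique nonzero $\Hasym$-rooted count (your position map $\psi$ is the paper's Observation~\ref{o:unique-nonzero-value}), recognize the deleted vertices by the deficient count $k-1$ versus $k$, and settle the remaining adjacencies by comparing $\deg_H(w)$ (obtained via Lemma~\ref{l:2-conn-whole-gds-from-n-1} and $2$-connectivity) with $\deg_{\Hasym}(\psi(w))$. Your pigeonhole argument that $(*)$ forces $k \leq 2$ is a small bonus the paper states only under the stronger twin-neighbourhood condition of Observation~\ref{o:neigh}, but otherwise the two proofs coincide.
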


\begin{obs}\label{o:neigh}
    Assume the situation from Theorem \ref{t:asym}. 
    If
    \begin{itemize}
        \item[\rm{(\ding{64})}] $N_H(v^i) = N_H(v^j)\ \forall\, i \neq j; i,j \in [k]$ such that $\Hasym \cong H \setminus \{v^i\}$
    \end{itemize}
then condition $(*)$ is satisfied.
\end{obs}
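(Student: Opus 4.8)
The plan is to establish the two assertions in turn, both resting on the observation that condition (\ding{64}) forces the vertices $v^1,\dots,v^k$ to be pairwise non-adjacent twins. Suppose $N_H(v^i)=N_H(v^j)=:N$ for all $i,j$. Since $v^i \notin N_H(v^i)$, we get $v^i \notin N = N_H(v^j)$, so $v^i$ and $v^j$ are non-adjacent; thus the $v^i$ share the common neighbourhood $N$ and form an independent set of twins in $H$.

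For the implication (\ding{64}) $\Rightarrow (*)$, I would first exhibit, for each pair $i\neq j$, the bijection $\psi_{ij}\colon V(\gmvh{v^i}) \to V(\gmvh{v^j})$ that sends $v^j \mapsto v^i$ and fixes every other vertex, and check that it is a graph isomorphism. This is where the twin property is used: a vertex $w \notin \{v^i,v^j\}$ keeps the same neighbours in both graphs, while $v^j$ (with neighbourhood $N$ in $\gmvh{v^i}$) is sent to $v^i$ (with neighbourhood $N$ in $\gmvh{v^j}$), and $N_H(v^i)=N_H(v^j)$ guarantees that adjacency is preserved. Next, since $\Hasym$ is rigid, the isomorphism $\phi_i\colon \gmvh{v^i} \to \Hasym$ is unique whenever it exists; consequently $\phi_j \circ \psi_{ij}$ and $\phi_i$ are two isomorphisms $\gmvh{v^i} \to \Hasym$, hence equal. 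Evaluating at any $w \notin \{v^i,v^j\}$ yields $\phi_i(w) = \phi_j(\psi_{ij}(w)) = \phi_j(w)$, so $w$ occupies the same vertex (equivalently, by rigidity, the same orbit) of $\Hasym$ no matter which twin was deleted. Since this holds for every admissible pair, it is exactly condition $(*)$.

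For the bound $k \leq 2$, I would argue by contradiction: suppose three twins $v^1,v^2,v^3$ all satisfy $\gmvh{v^i} \cong \Hasym$. Deleting $v^1$ leaves both $v^2$ and $v^3$ inside $\Hasym = \gmvh{v^1}$, each with neighbourhood $N$ (as $v^1 \notin N$) and non-adjacent to one another. Hence the transposition swapping $v^2$ and $v^3$ and fixing everything else is a non-trivial automorphism of $\Hasym$, contradicting its rigidity. Therefore at most two such vertices can exist.

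The routine part is verifying that $\psi_{ij}$ respects adjacency; the conceptual crux, and the step I expect to require the most care, is the use of rigidity to force $\phi_i = \phi_j\circ\psi_{ij}$, since it is precisely this uniqueness that upgrades ``the deleted graphs are isomorphic'' to ``the isomorphisms agree vertex by vertex,'' which is what condition $(*)$ demands.
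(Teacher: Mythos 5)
Your proposal is correct, and it is worth noting that it proves strictly more than the paper does. The paper's own proof of this observation consists solely of the argument for the ``moreover'' claim: if $k>2$, the twin transposition $f_{i,j}$ swapping $v^i$ and $v^j$, restricted to $V(H)\setminus\{v^h\}$, is a non-trivial automorphism of $H\setminus\{v^h\}\cong\Hasym$, contradicting rigidity --- which is exactly your third paragraph (you delete $v^1$ and swap $v^2,v^3$; the paper deletes $v^h$ and swaps $v^i,v^j$). The main implication, that {\rm(\ding{64})} implies condition $(*)$, is left implicit in the paper with no written argument at all. Your second paragraph supplies precisely the missing reasoning, and it is sound: the swap map $\psi_{ij}$ is an isomorphism $\gmvh{v^i}\to\gmvh{v^j}$ because the $v^i$ are pairwise non-adjacent twins, and rigidity of $\Hasym$ makes the isomorphism onto $\Hasym$ unique, forcing $\phi_i=\phi_j\circ\psi_{ij}$ and hence $\phi_i(w)=\phi_j(w)$ for every $w$ outside the deleted pair; since orbits of a rigid graph are singletons, this is exactly the orbit-consistency demanded by $(*)$. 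Your identification of the rigidity-forced uniqueness of isomorphisms as the crux is apt --- it is the step that upgrades ``isomorphic vertex-deleted subgraphs'' to ``vertexwise-consistent embeddings,'' and it is the very point the paper glosses over.
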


The condition ({\rm \ding{64}}) can also be formulated in a different way.
Two vertices $u,v \in V(H)$ having the same neighbourhood, i.e. $N_H(u) = N_H(v)$, are called {\it twins}.
Vertices $v, v'$ (or possibly more) for which there exists an automorphism of $H$ mapping $v$ to $v'$, are called {\it similar}. 
If $v$ and $v'$ are not similar but $H \setminus \{v\} \cong H \setminus \{v'\}$, then they are {\it pseudosimilar} \cite{harary-palmer1966pseudosim}.
There exist constructions generating graphs containing such vertices. \cite{Godsil1983pseudosim, Lauri2003pseudosim, Kimble1981pseudosim, Wagner2014pseudosim}
\begin{cor}
    Let $H$ be a $2$-vertex connected graph containing a vertex $v \in V(H)$ such that $H \setminus \{v\}$ is asymmetric or almost-asymmetric
    and one of the following holds
    \begin{enumerate}
        \item There is no vertex $v' \in V(H)$ either similar or pseudosimilar to $v$.
        \item All vertices $v^1, \dots, v^k$, $k \geq 2$ which are either similar or pseudosimilar to $v$ form twins with $v$.
    \end{enumerate} 
    Then $G$ is reconstructible from its $(n-1)$-graphlet degree distribution.
\end{cor}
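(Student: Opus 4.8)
The plan is to derive the corollary directly from Theorem~\ref{t:asym} and Observation~\ref{o:neigh} by checking that in each case the structural hypothesis $(*)$ is met. The starting point is to note that, by definition, the vertices $v'\in V(H)$ with $v'\neq v$ that are either similar or pseudosimilar to $v$ are exactly those for which $H\setminus\{v'\}\cong H\setminus\{v\}\cong\Hasym$: similarity supplies an automorphism of $H$ carrying $v$ to $v'$ (hence $H\setminus\{v'\}\cong H\setminus\{v\}$), while pseudosimilarity is by definition the isomorphism $H\setminus\{v'\}\cong H\setminus\{v\}$ without such an automorphism. Together with $v$ itself these vertices constitute the family $v^1,\dots,v^k$ appearing in the statement of condition $(*)$. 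Since rigidity of $H\setminus\{v\}$ is assumed outright, the only remaining task before invoking the theorem is to establish $(*)$ in each of the two listed cases.

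For the first case, I would argue that the absence of any vertex similar or pseudosimilar to $v$ forces $v$ to be the \emph{unique} vertex of $H$ whose deletion yields $\Hasym$. Consequently no family $v^1,\dots,v^k$ with $k\geq 2$ exists, the premise of $(*)$ is never triggered, and the condition holds vacuously. Theorem~\ref{t:asym} then applies and gives reconstructibility of $H$ from its $(n-1)$-gdd.

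For the second case, the hypothesis states that every vertex similar or pseudosimilar to $v$ is a twin of $v$, i.e.\ shares its neighbourhood. Transitivity of equality of neighbourhoods then yields $N_H(v^i)=N_H(v^j)$ for all $i\neq j$, which is precisely condition ({\rm \ding{64}}) of Observation~\ref{o:neigh}. By that observation, ({\rm \ding{64}}) implies $(*)$ (and, incidentally, forces $k\leq 2$), so Theorem~\ref{t:asym} again delivers the conclusion.

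The only care needed is essentially bookkeeping: one should confirm that twins here are non-adjacent (since $v\notin N_H(v)$ while $N_H(v)=N_H(v')$), so that the transposition of two twins is a genuine automorphism of $H$; this also shows twins are automatically similar, which is consistent with Case~2 being phrased in terms of similar or pseudosimilar vertices, and with rigidity being a property of $\Hasym$ rather than of $H$. I expect no real obstacle beyond this translation between the structural formulation $(*)$/({\rm \ding{64}}) and the classical terminology of similar, pseudosimilar, and twin vertices; the substantive content is already carried by Theorem~\ref{t:asym} and Observation~\ref{o:neigh}, so the corollary is in effect a reformulation of those results.
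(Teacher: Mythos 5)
Your proof is correct and is essentially the paper's own (implicit) argument: the corollary is stated there as an immediate consequence of Theorem~\ref{t:asym} and Observation~\ref{o:neigh}, and you fill in exactly the intended steps --- identifying the similar/pseudosimilar vertices with the family $v^1,\dots,v^k$ satisfying $H\setminus\{v^i\}\cong\Hasym$, treating Case~1 as a vacuous instance of $(*)$, and reducing Case~2 to condition ({\rm \ding{64}}) via the observation. Your bookkeeping remarks (twins are non-adjacent, hence their transposition is an automorphism of $H$, which is compatible with rigidity being required only of $\Hasym$) are also accurate and consistent with the paper's remark that $k\leq 2$.
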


Before proving Theorem \ref{t:asym}, it is useful to make several observations.

\begin{obs}\label{o:unique-nonzero-value}
Assume the situation from Theorem \ref{t:asym}. 
If condition $(*)$ is satisfied, then $\{ D_{w,p}\ |\ p \in \vartheta(\Graphlet{[\Hasym]})\}$ contains exactly one nonzero value for every $w \in V(H)$.
\end{obs}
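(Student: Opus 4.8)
The plan is to translate the statement about $\Hasym$-graphlet counts into a statement about the vertex deletions that realise $\Hasym$ as an induced subgraph of $H$, and then to read off condition $(*)$ almost directly. Write $S = \{x \in V(H) \mid \gmvh{x} \cong \Hasym\}$; by hypothesis $v \in S$. Since $|\Hasym| = n-1 = |V(H)|-1$, every induced copy of $\Hasym$ inside $H$ uses all but one vertex, so induced embeddings of $\Hasym$ into $H$ correspond bijectively to the vertices $x \in S$, the image being $\gmvh{x}$. Because $\Hasym$ is rigid, for each $x \in S$ there is a \emph{unique} isomorphism $\psi_x \colon \gmvh{x} \to \Hasym$, and there is exactly one embedding per image. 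Rigidity also makes every automorphism orbit of $\Hasym$ a singleton, so the graphlets in $\Graphlet{[\Hasym]}$ are precisely the pairs $\gfroot{\Hasym}{r}$ as $r$ ranges over $V(\Hasym)$, one per vertex; hence I may index $p \in \vartheta(\Graphlet{[\Hasym]})$ by its root $r_p \in V(\Hasym)$.

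With this dictionary in place, $D_{w,p} = \graphcount{H}{\gfroot{\Hasym}{r_p}}{w}$ counts the induced embeddings $e$ of $\Hasym$ into $H$ with $e(r_p) = w$. Each such $e$ is exactly $\psi_x^{-1}$ for some $x \in S$ with $x \neq w$ (we need $w$ to lie in the image), and the condition $e(r_p) = w$ is equivalent to $\psi_x(w) = r_p$. The first step is therefore to establish the identity $D_{w,p} = |\{x \in S \setminus \{w\} \mid \psi_x(w) = r_p\}|$, from which summing over $p$ gives $\sum_{p} D_{w,p} = |S \setminus \{w\}|$.

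Next I would fix $w$ and let $x$ range over $S \setminus \{w\}$. Each $\psi_x(w)$ is a single vertex of $\Hasym$, and its orbit is the singleton $\{\psi_x(w)\}$ by rigidity; thus the phrase in $(*)$ that $w$ lies in the same orbit of $\Hasym$ regardless of which $v^i$ is removed says precisely that $\psi_x(w)$ does not depend on the choice of $x \in S \setminus \{w\}$. Calling this common vertex $r^{\ast}$, the identity above yields $D_{w,p} = |S \setminus \{w\}|$ when $r_p = r^{\ast}$ and $D_{w,p} = 0$ otherwise, so $\{ D_{w,p}\mid p \in \vartheta(\Graphlet{[\Hasym]})\}$ contains exactly one nonzero value, as claimed.

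The main obstacle I anticipate is not the counting but pinning down the interface between $(*)$ and the isomorphisms $\psi_x$: one must verify that ``orbit of $\Hasym$'' genuinely collapses to the single vertex $\psi_x(w)$ and that comparing the positions of $w$ across the different deleted copies $\gmvh{x}$ is legitimate, which is exactly where rigidity does the real work through the uniqueness of $\psi_x$. A secondary point is the boundary case $S \setminus \{w\} = \emptyset$, which occurs precisely when $w$ is the unique element of $S$ (so $w = v$ and no vertex is similar or pseudosimilar to it); there the whole row is identically zero rather than having one nonzero entry, and I would either restrict the conclusion to vertices surviving some deletion in $S$ or record the all-zero row as the signature that identifies the deleted vertex $v$, for later use in Theorem \ref{t:asym}. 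Finally, I would check that the quantifier $k \geq 2$ in $(*)$ is consistent with whether $v$ is counted among the $v^i$, so that the degenerate case $|S| = 1$ is accounted for.
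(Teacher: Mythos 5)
Your proof is correct and, at its core, it is the same argument as the paper's: the paper disposes of this observation in a single sentence, saying that each entry $D_{w,p}$, $p \in \vartheta(\Graphlet{[\Hasym]})$, counts the number of times $w$ touches a rooting of $\Hasym$, i.e., records the orbit of $\Hasym$ in which $w$ occurs, so that condition $(*)$ forces at most one position to be hit. Your deleted-vertex dictionary, $D_{w,p} = |\{x \in S \setminus \{w\} \mid \psi_x(w) = r_p\}|$, is a rigorous elaboration of exactly that sentence, with rigidity supplying both the uniqueness of the isomorphisms $\psi_x$ and the collapse of orbits to single vertices. The one substantive point where you go beyond the paper is the boundary case $S = \{w\}$, i.e.\ $k = 1$ and $w = v$: there the entire row $\{D_{v,p}\ |\ p \in \vartheta(\Graphlet{[\Hasym]})\}$ is identically zero, so the observation as literally stated (``exactly one nonzero value for every $w \in V(H)$'') fails for $w = v$; the paper's one-line proof silently ignores this. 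Your catch is consistent with how the observation is later used in the proof of Theorem \ref{t:asym}, where $v$ is singled out as the vertex with $D_{w,p} = k-1$ --- which equals $0$ when $k = 1$, i.e., precisely your all-zero-row signature --- so your proposed repair (restrict the claim to vertices surviving some deletion, and treat the all-zero row as the marker identifying $v$) is the right way to state it.
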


\begin{proof}
   Every element of $\{ D_{w,p}\ |\ p \in \vartheta(\Graphlet{[\Hasym]})\}$ corresponds to the number of times $w$ touches a graphlet created by a rooting of $\Hasym$, i.e., the number of orbits of $\Hasym$ in which $w$ occurs.
\end{proof}

\begin{obs}\label{o:D-verifyiable}
All the assumptions of Theorem \ref{t:asym} can be verified using the $(\leq n-1)$-gdd $D$ of $H$.
\end{obs}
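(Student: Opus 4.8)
The plan is to verify each of the three hypotheses of Theorem~\ref{t:asym} separately, reading everything off the matrix $D$. The first hypothesis, that $H$ is $2$-vertex connected, is immediate from Proposition~\ref{p:k-conn-graphlet-num} applied with $k=2$: I would compute, for every row $v$, the sum $\sum_{j\,:\,|\gfo{j}| = n-1} D_{v,j}$ of the entries lying in columns indexed by graphlets of size $n-1$, and check that each such sum equals $\binom{n-1}{1} = n-1$. Since the column indices $\vartheta$ and the underlying-graph function $U$ are fixed and deterministic, I can read off from the index $j$ alone both the size $|\gfo{j}|$ and the isomorphism type of $U(\gfo{j})$, so grouping the columns by size and by underlying graph requires no information beyond $D$ itself.

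The second hypothesis asks for a vertex $v$ with $\Hasym = \gmvh{v}$ rigid. Because $H$ is $2$-connected, every vertex-deleted subgraph is connected, so the underlying graphs appearing among the size-$(n-1)$ columns of $D$ are exactly the vertex-deleted subgraphs of $H$. For each such underlying graph $F$ I would recover its isomorphism type from any column index $j$ with $U(\gfo{j}) = F$ and test whether $F$ is rigid; equivalently, rigidity holds precisely when $F$ has $n-1$ automorphism orbits, which equals the number of distinct columns with underlying graph $F$ carrying a nonzero entry (each orbit of $F = \gmvh{x}$ is realised by rooting at some vertex of $\gmvh{x}$). Thus the existence of a rigid $\Hasym$ is decidable from $D$, and this step simultaneously locates the column block $\vartheta(\Graphlet{[\Hasym]})$ needed for the final check.

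Finally, condition $(*)$ is verified through the criterion of Observation~\ref{o:unique-nonzero-value}: I claim $(*)$ holds if and only if, for every $w \in V(H)$, the set $\{\, D_{w,p} \mid p \in \vartheta(\Graphlet{[\Hasym]}) \,\}$ contains at most one nonzero value. The forward direction is exactly Observation~\ref{o:unique-nonzero-value}. For the converse I would exploit rigidity of $\Hasym$: since $\Hasym$ has trivial automorphism group, for each vertex $v^i$ with $\gmvh{v^i} \cong \Hasym$ there is a \emph{unique} isomorphism $\phi_i \colon \gmvh{v^i} \to \Hasym$, so the graphlet type of $\gfroot{\gmvh{v^i}}{w}$ is determined by the single vertex $\phi_i(w)$, i.e.\ by the orbit of $w$. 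Consequently the number of nonzero entries in the block $\{D_{w,p}\}$ equals the number of distinct orbits into which $w$ is sent over all admissible deletions, and when $k \geq 2$ every $w$ distinct from some $v^i$ contributes at least once; hence ``at most one nonzero value for all $w$'' is equivalent to ``$w$ lands in the same orbit regardless of which $v^i$ is removed'', which is $(*)$. The hard part will be precisely this converse equivalence: it hinges on rigidity to guarantee a one-to-one correspondence between orbits of $\Hasym$ and the columns of $\vartheta(\Graphlet{[\Hasym]})$, so I must handle carefully the bookkeeping of which deletions a given $w$ survives (in particular the case $w = v^j$) to ensure the count of nonzero columns faithfully records orbit multiplicities rather than embedding multiplicities.
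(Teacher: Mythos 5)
Your proposal is correct and follows essentially the same approach as the paper: both read off the structure of each graphlet from the fixed ordering $\vartheta$, detect a rigid vertex-deleted subgraph among the underlying graphs of the nonzero size-$(n-1)$ columns, and verify condition $(*)$ via the unique-nonzero-value criterion in the column block $\vartheta(\Graphlet{[\Hasym]})$ (Observation \ref{o:unique-nonzero-value}). Yours is in fact slightly more thorough than the paper's proof: you explicitly test $2$-connectivity via Proposition \ref{p:k-conn-graphlet-num}, and you prove the converse direction of the $(*)$-criterion (with the correct ``at most one nonzero'' formulation, which handles the $k=1$ case where the row of a deleted vertex is all zeros), both of which the paper leaves implicit.
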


\begin{proof}
    Let $\mathcal{D}$ be the set of indices of columns of $D$, that contain a nonzero entry.
    The existence of a vertex $v \in V(H)$ whose removal leads to an asymmetric or almost-asymmetric graph can be verified by simply examining whether $\{U(\gfo{i})\, |\,i \in \mathcal{D}, |V(U(\gfo{i}))| = n-1\}$ contains an assymetric or almost-asymmetric graph. Recall that the ordering $\vartheta$ of graphlets is fixed and known, so the structure of every $\gfo{i}$ is also known.

    Moreover, if $\{D_{w,p}\ |\ p \in \vartheta(\Graphlet{[\Hasym]})\}$ contains a unique nonzero value, then $w$ always touches $\Hasym$ at $\gfo{p}$ corresponding to this value, regardless of which $v^i, i \in k$ have been removed from $H$ to create $\Hasym$ and condition $(*)$ of Theorem \ref{t:asym} is satisfied.
\end{proof}

\begin{proof}[Proof of Theorem \ref{t:asym}]
The reconstruction is done by finding the structure of $\Hasym$, adding a vertex $v$ and then determining for every $w \in V(H) \setminus \{v\} = V(\Hasym)$ whether $wv \in V(H)$.  By using the $2$-connectivity of $H$ and Lemma \ref{l:2-conn-whole-gds-from-n-1}, we obtain $(\leq n-1)$-gdd of $H$ from the $(n-1)$-gdd of $H$. 

Let $\mathcal{H_{\text{n-1}}}$ be the set of graphlets $\gfo{i}$ of size $n-1$ such that $D_{*,i}$ is a nonzero column of $(\leq n-1)$-gdd $D$ of $H$. 
Denote $\mathcal{H}_{\text{asym}} \subset \mathcal{H_{\text{n-1}}}$ the subset of all graphlets  of size $n-1$ occurring in $H$ having asymmetric or almost-asymmetric underlying graphs.
By assumption, there is a nonempty set $\mathcal{H_{\text{cond}}} \subseteq \mathcal{H}_{\text{asym}}$ of graphlets satisfying condition $(*)$. 
Observation \ref{o:D-verifyiable} shows that it is verifiable using $D$ whether $\gfo{i} \in \mathcal{H_{\text{cond}}}$. We fix a graph $\Hasym = U(\gfo{i}), \gfo{i} \in \mathcal{H_{\text{cond}}}$ such that $i = \min\{k\ |\  \gfo{k}\in \mathcal{H_{\text{cond}}}\}$.
Recall that the ordering $\vartheta$ of graphlets is known, which gives us the structure of $\Hasym$.

The next step is to fix a vertex $v \in V(H)$ in $V(H) \setminus V(\Hasym)$. As the orbit at which $w \in V(H)$ touches $\Hasym$ is independent on the choice of $v \in \{v^i\,|\,i \in [k]\}$, we only need to identify any vertex $v^i$ for any $i \in [k]$.

By Observation \ref{o:unique-nonzero-value}, for every $w \in V(H)$, there exists a unique index $p \in \vartheta(\Graphlet{[\Hasym]})$ such that $D_{w,p}$ is nonzero. Then $\gfo{p}$ corresponds to $\Hasym$ rooted at $w$.
Note that if $w \in V(H) \setminus \{v^i\ |\ i \in [k]\}$, then $D_{w,p} = k$. However, if $w \in \{v^i\ |\ i \in [k]\}$, then $D_{w,p} = k-1$, as $w$ never touches $H \setminus \{w\}$. 
So we fix $v = w$ such that $D_{w,p} = k-1$.

    Finally, we reconstruct $H$ by taking $\Hasym$ and introducing a new vertex $v$.
    It only remains to determine whether $wv \in E(H)$ for each $w \in V(\Hasym)$.
    If a vertex $w \in V(\Hasym)$ belongs to an automorphism orbit of size one, then we reconstruct whether $wv \in H$ according to Lemma \ref{l:root-missing-edge}.

    If $\Hasym$ is almost-asymmetric and $w$ belongs to the only automorphism orbit $o$ of size $\ell > 1$, there is a set of $\ell$ nonzero entries corresponding to vertices $w_1, \dots, w_\ell$ in the same automorphism orbit of $\Hasym$. By Lemma \ref{l:root-missing-edge}, the number of edges between $o$ and $v$ is known.
    
    It is not possible to distinguish between $w_i$'s, but as there is only one orbit $o$ of $\Hasym$ size larger than one, the choice of which vertices of $o$ will be connected to $v$ can be made arbitrarily.
    The reason is that as each vertex $u \in V(\Hasym) \setminus \{w_1, \dots, w_\ell\}$ has its own automorphism orbit, $u$ must have the same relation to each vertex of $o = \{w_1, \dots, w_\ell\}$. Otherwise, $w_1, \dots, w_\ell$ would not be in the same automorphism orbit of $\Hasym$.
\end{proof}
As a remark, if we proceeded the same in the case of multiple automorphism orbits of $\Hasym$, the arbitrary choice of edges between vertices of each orbit would be done more than once. However, those choices are not independent, as the choice for first orbit may change the other orbits and thus the vertices would not be indistinguishable anymore.

\section{Uniqness of graphlet degree distributions and sequences}\label{s:uniq}
Graphlet degree distribution can be viewed as a way to fingerprint the given graph. A key question of this viewpoint is: How well does graphlet degree distribution represent the given graph? Clearly, $2-gdd$, i.e. degree sequence, is not a good representation because there are many graphs with the same degree distribution. What about $(\leq n-1)-gdd$ of a graph, is it always unique?
If so, which size $k$ of graphlets is needed to guarantee the uniqueness of $\leq k-gdd$ of the graph in hand? 
By the result of Nýdl \cite{nydl1981}, there exist graphs on $3j+9$ vertices which have the same subgraphs up to size $2j$, so $k$ must be at least $\frac{2n}{3} - 6$.

Slightly harder question revolves around graphlet degree sequences of vertices.
Given two vertices inside the same graph, does equal $(\leq n-1)$-gds imply that those vertices belong to the same orbit?

And what about vertices with the same $(\leq n-1)$-gds in distinct graphs $G$ and $H$. Clearly, $|G| = |H|$, but is it also the case that $G \approx H$ and those vertices are isomorphic images?

The answer to the last question is negative. 
There is a construction leading to two vertices with the same $(\leq n-1)-gds$, which belong to distinct graphs on $n$ vertices, see Figure \ref{f:same-gds-diff-graphs}.
Note that by computer search, this construction is the only example of vertices with the same gds inside distinct graphs for graph sizes at most six.
It is natural to ask whether this is the only way to obtain vertices with the same $(\leq n-1)-gds$ belonging to (not necessarily) distinct graphs.

\begin{figure}
    \centering
    \includegraphics[width=0.4\linewidth]{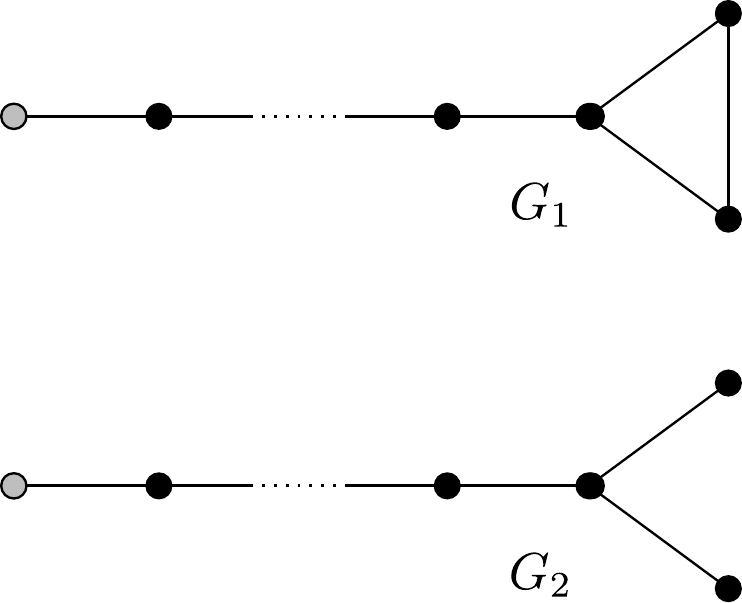}
    \caption{The two graphs $G_1$ and $G_2$ are constracted by taking a path on $n-2$ vertices and ending it with a  triangle or a fork respectively. 
    The grey vertices then have the same gds containing graphlets isomorphic to a path rooted at its end for lengths $1, \dots, n-1$, all such graphlets being represented with multiplicity $1$ with the exception of length $n-1$, which occurs twice. }
    \label{f:same-gds-diff-graphs}
\end{figure}

\section{Deciding whether there exists a graph having given gdd}\label{s:dec}
As the name suggests, graphlet degree sequence generalizes the notion of degree sequence. It is known that $deg_G(v) \in \{ 1, \dots, (n - 1) \}$, where $n = |V(G)|$. 
Not only there is a condition on the values which are obtainable as degrees, there is also a condition related to the whole degree sequence.
Specifically, 
$$ \sum_{v \in V(G)} deg_G(v) \cong 0 \mod 2.$$
The Havel-Hakimi theorem \cite{havel, hakimi} leads to an algorithm which either constructs a graph $G$ with given degree sequence $s$ or truly claims that no such graph exists.

There are dependencies between the graphlet counts of larger and smaller graphlets, as small graphets are included in larger ones, so an occurence of each graphlet (of size larger than 2) implies some number of occurences of smaller graphlets in the graph. 
The dependencies of the graphlet counts for graphlets of sizes 4 and 5 has been studied and applied to improve the time complexity of algorithms enumerating graphlets in a given graph. \cite{orca}

However, here we focus on the relations between the graphlet counts for graphlets of size 2 and 3, because $\{2,3\}$-gds is a natural generalization of a degree sequence. The aim is to obtain assertions that would be helpful in solving the following problem:

\prob{Obtainability of graphlet degree sequence }{matrix $M^{n \times 3}$}{decide whether there exists a graph G s.t. M is $\{2,3\}$-gdd of G}

In applications, it is sometimes useful to be able to generate a graph with prescribed graphlet degree distribution, that is similar to a graphlet degree distribution of a real-world network. Recently, there has been introduced a probabilistic algorithm designed to solve this problem.\cite{gen-graphlet-frequency} 
The problem defined above however aims to recognize that the graphlet degree distribution cannot be realized by any graph, if that is the case.

Let us start with asserting forbidden combinations of values inside graphlet degree of a vertex $v$.
\begin{lemma}
Consider a graph $H$, its vertex $v$ and graphlets $\gf{G^0}, \gf{G^1}$ and $\gf{G^3}$ (according to the function $\gamma$). Then
    $$\deg(v) = \sum_{u \in N_H(v)} \graphcount{H}{\gfo{0}}{u} -  \graphcount{H}{\gfo{1}}{v} - 2\graphcount{H}{\gfo{3}}{v}.$$ 
\end{lemma}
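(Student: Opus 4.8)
The plan is to establish the identity by a double counting of the length-two walks emanating from $v$, classified according to the induced subgraph they span. First I would recall the elementary fact, already used in the proof of Theorem~\ref{t:asym}, that orbit $0$ is the endpoint of the single edge $K_2$, so that $\graphcount{H}{\gf{G_0}}{u} = \deg_H(u)$ for every vertex $u$. Hence $\sum_{u \in N_H(v)} \graphcount{H}{\gf{G_0}}{u} = \sum_{u \in N_H(v)} \deg_H(u)$, and this quantity counts precisely the ordered pairs $(u,w)$ with $u \in N_H(v)$ and $w \in N_H(u)$: for each neighbour $u$ of $v$ we range over all $\deg_H(u)$ edges incident to $u$.

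The core of the argument is to partition these pairs according to how $w$ relates to $v$; there are three mutually exclusive possibilities. If $w = v$, the pair merely traverses the edge $vu$ backwards, and there is exactly one such pair for each $u \in N_H(v)$, contributing $\deg_H(v)$ in total. If $w \neq v$ and $w \in N_H(v)$, then $\{v,u,w\}$ induces a triangle $K_3$ through $v$; since graphlets are induced subgraphs, this is exactly orbit $3$, and each triangle through $v$ is produced by the two ordered pairs $(u,w)$ and $(w,u)$, so these pairs contribute $2\,\graphcount{H}{\gf{G_3}}{v}$. Finally, if $w \neq v$ and $w \notin N_H(v)$, then $\{v,u,w\}$ induces a path $P_3$ with $v$ at an endpoint, i.e.\ orbit $1$, and each such induced path corresponds to a unique pair $(u,w)$, contributing $\graphcount{H}{\gf{G_1}}{v}$.

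Summing the three contributions gives $\sum_{u \in N_H(v)} \deg_H(u) = \deg_H(v) + \graphcount{H}{\gf{G_1}}{v} + 2\,\graphcount{H}{\gf{G_3}}{v}$, and isolating $\deg_H(v)$ yields the claimed identity. The one point that requires care — which I would regard as the main subtlety rather than a genuine obstacle — is the reliance on the induced-subgraph convention to separate the triangle case from the path case, the distinction being governed entirely by whether $w \in N_H(v)$, together with the factor of $2$ arising from the two orientations of each triangle; a miscount here would shift the coefficient of $\graphcount{H}{\gf{G_3}}{v}$ and break the identity.
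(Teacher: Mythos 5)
Your proof is correct and takes essentially the same approach as the paper: both arguments decompose $\sum_{u \in N_H(v)} \deg_H(u)$ into backtracking walks (giving $\deg(v)$), walks closing a triangle (giving $2\graphcount{H}{\gf{G_3}}{v}$ via common neighbours), and walks ending outside $N_H(v)$ (giving the induced paths $\graphcount{H}{\gf{G_1}}{v}$). The paper merely packages the same count as two separate identities, $2\graphcount{H}{\gf{G_3}}{v} = \sum_{u \in N_H(v)} c(u,v)$ and $\graphcount{H}{\gf{G_1}}{v} = \sum_{u \in N_H(v)}\bigl(\deg(u)-1-c(v,u)\bigr)$, and then rearranges, whereas you present it as a single trichotomy of length-two walks; the counting content is identical.
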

\begin{proof}
    Let $c(u,v)$ be the number of common neighbors of vertices $u$ and $v$.
    If we fix $v \in V(H)$, we can double count twice the number of triangles containing $v$ as follows:
    $$2 \cdot \graphcount{H}{\gfo{3}}{v} = \sum_{u \in N_H(v)} c(u,v).$$
    This holds because $U(\gfo{3})$ is isomorphic to a triangle and because common neighbors $u$ and $v$, $u$ being adjacent to $v$, always form a triangle, where each triangle is counted twice.
    
    As a next step, let us count the graphlet count of $\gfo{1}$ rooted in $v$.
    Note that if there is a path of length three starting in $v$, the only case when it is not induced is when its last vertex is connected to $v$, forming a triangle. From this it follows
    \begin{align*}
        \graphcount{H}{\gf{G_1}}{v} &= \sum_{u \in N_H(v)} \Big(\deg(u) - 1 - c(v,u)\Big)\\
        &= \sum_{u \in N_H(v)} (\graphcount{H}{\gfo{0}}{u} - 1) - \sum_{u \in N_H(v)} c(v, u)\\
        &=  \sum_{u \in N_H(v)} \graphcount{H}{\gfo{0}}{u} - \deg(v) - 2\graphcount{H}{\gfo{3}}{v}\\
        \graphcount{H}{\gfo{0}}{v} = \deg(v) &= \sum_{u \in N_H(v)} \graphcount{H}{\gfo{0}}{u} -  \graphcount{H}{\gfo{1}}{v} - 2\graphcount{H}{\gfo{3}}{v}
    \end{align*}
    
\end{proof}

The occurences of graphlets $\gfo{2}$, a cherry rooted in the middle vertex, and the triangle $\gfo{3}$, are closely related to the structure and size of the neighbourhood of a fixed vertex $v$.

\begin{obs}
    Consider a graph $H$, its vertex $v$  and graphlets $\gf{G^0}, \gfo{2}$ and $\gf{G^3}$ (according to the function $\gamma$). Then
    $$ {\graphcount{H}{\gfo{0}}{v} \choose 2} = \graphcount{H}{\gfo{2}}{v} + \graphcount{H}{\gfo{3}}{v}.$$
\end{obs}
In other words, every edge in $N_H(v)$ corresponds to $v$ touching a triangle and every non-edge in $N_H(v)$ corresponds to $v$ touching a cherry rooted in the middle.
Note that this observation also implies that a vertex $v$ of degree one must have $\graphcount{H}{\gfo{2}}{v} = \graphcount{H}{\gfo{3}}{v} = 0$.

\begin{obs}
    For a graph $H$ and graphlets $\gfo{0}, \gfo{1}$ and $\gfo{3}$ holds the following equality:
    $$\sum_{e=\{u,v\} \in E(H)} c(u,v) = \sum_{v \in V(G)} \graphcount{H}{\gfo{3}}{v},$$
    where $c(u,v)$ denotes the number of common neighbors of $u$ and $v$.
\end{obs}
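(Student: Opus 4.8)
The plan is to prove this identity by a double-counting argument, showing that both sides count the same quantity, namely three times the number of triangles in $H$, which I will denote $t(H)$.

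First I would note that $U(\gfo{3}) \cong K_3$, the triangle, which has a single automorphism orbit. Consequently $\graphcount{H}{\gfo{3}}{v}$ is precisely the number of triangles of $H$ that contain $v$ as a vertex, with no ambiguity about the rooting. Summing over all $v \in V(H)$, each triangle gets counted once for each of its three vertices, so the right-hand side equals $3\,t(H)$.

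Next I would analyze the left-hand side. For an edge $e = \{u,v\} \in E(H)$, every common neighbour $w$ of $u$ and $v$ yields a triangle $\{u,v,w\}$ containing $e$, and conversely every triangle through $e$ arises in this way; hence $c(u,v)$ equals the number of triangles containing the edge $e$. Summing over all edges of $H$ therefore counts each triangle once for each of its three edges, so the left-hand side also equals $3\,t(H)$. Comparing the two expressions yields the claimed equality.

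I expect no genuine obstacle here, as the statement is a direct incidence count. The only point that needs a line of care is the identification of $\gfo{3}$ with the triangle graphlet and the fact that, because $K_3$ has a single orbit, its graphlet degree at $v$ counts triangles through $v$ rather than some orbit-restricted variant; this follows immediately from the fixed ordering $\vartheta$. Everything else is the standard observation that a triangle has three vertices and three edges, which supplies the matching factor of three on both sides.
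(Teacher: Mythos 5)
Your proof is correct and follows essentially the same route as the paper's: both arguments double-count triangle incidences, showing that each side equals three times the number of triangles in $H$ (the left-hand side counting each triangle once per edge, the right-hand side once per vertex). Your write-up is in fact slightly more careful than the paper's, since you explicitly justify that $\graphcount{H}{\gfo{3}}{v}$ counts triangles through $v$ via the single automorphism orbit of $K_3$.
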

\begin{proof}
    The equality is based on double counting of the number of triangles.
The left-hand side is counted by first fixing an edge and then counting the triangles incident with it. Note that each triangle is counted three times, once for each its edge.
The right-hand side is by counting the number of triangles touched by each vertex $v$ and realizing that each triangle is counted three times, once for each vertex contained in it.
\end{proof}


\section{Conclusion}
Graphlet degree distributions are a powerful tool providing information about the local structure of the graph. In this work, we examined its properties and showed its potential through its ability to reconstruct the structure of graphs.
Although the set of graphlet-reconstructible graphs is larger than the deck-reconstructible graphs, it remains an open question whether the graplet version of the reconstruction conjecture holds or not.

\section{Acknowledgements}
David Hartman, Aneta Pokorná and Daniel Trlifaj were supported by the Czech Science Foundation Grant No. 23-07074S.

\bibliographystyle{plain}
\bibliography{graphlets}

\end{document}